\newtheorem{thm}{Theorem}
\newtheorem{lemma}[thm]{Lemma}
\newtheorem{prop}[thm]{Proposition}
\newtheorem{cor}[thm]{Corollary}
\newtheorem{defi}[thm]{Definition}
\newtheorem{conj}[thm]{Conjecture}
\newtheorem{ques}[thm]{Question}
\theoremstyle{remark}
\newtheorem{ex}[thm]{Example}
\newtheorem{rmk}[thm]{Remark}
\newcommand{\Z}{\mathbb{Z}}
\newcommand{\Q}{\mathbb{Q}}
\newcommand{\F}{\mathbb{F}}
\newcommand{\D}{\mathbb{D}}
\newcommand{\s}{\mathfrak{s}}
\newcommand{\XX}{\mathbb{X}}
\newcommand{\OO}{\mathbb{O}}
\newcommand{\spinc}{\mbox{Spin}^c}
\newcommand{\zetap}{\Z_p}
\newcommand{\lp}{L(p,q)}
\newcommand{\conc}{\mathcal{C}^Y}
\newcommand{\concom}[1]{\mathcal{C}_{#1}^Y}
\newcommand{\nodi}{\mathcal{K}(Y)}
\newcommand{\concsf}{\mathcal{C}}
\newcommand{\qconc}{\widetilde{\mathcal{C}}^Y}
\newcommand{\sage}{\raisebox{-0.7ex}{\includegraphics[width=1cm]{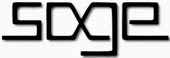}}}
\title{On concordances in 3-manifolds}
\author{Daniele Celoria}
\date{}
\address{Department of Mathematics, Oxford University, Oxford, U.K.}
\email{Daniele.Celoria@maths.ox.ac.uk}
\begin{document}

\begin{abstract}
We describe an action of the concordance group of knots in $S^3$ on concordances of knots in arbitrary $3$-manifolds. As an application we define the notion of \emph{almost-concordance} between knots. After some basic results, we prove the existence of non-trivial almost-concordance classes in all non-abelian $3$-manifolds. Afterwards, we focus the attention on the case of lens spaces, and use a modified version of the Ozsv{\'a}th-Szab{\'o}-Rasmussen's $\tau$-invariant to obstruct almost-concordances and prove that each $L(p,1)$ admits infinitely many nullhomologous non almost-concor-\\dant knots.
Finally we prove an inequality involving the cobordism $PL$-genus of a knot and its $\tau$-invariants, in the spirit of \cite{sarkar2010grid}.
\end{abstract}
\maketitle

\section*{Introduction}\label{sec:intro}

A classical and extensively studied feature of knots in the $3$-sphere is the group structure induced by connected sum on concordance classes. Much is known on the concordance group $\mathcal{C}$, and many recent progresses have been made by Heegaard Floer theoretic techniques. On the other hand, concordances in manifolds other than $S^3$ lack a clear algebraic structure.

The purpose of this paper is to describe an action of the concordance group of knots in $S^3$ on concordances of knots in an arbitrary $3$-manifold. The action consists simply in taking the connected sum of a concordance class of a knot in a $3$-manifold with a concordance class of a knot in $S^3$. After showing that the action is well defined, we introduce the related notion of \emph{almost-concordance}. This can be thought of as concordance up to connect sum with knots in $S^3$.

Similar constructions and definitions have been previously considered by Rolfsen in \cite{rolfsen85} (see also Hillman's book \cite[Sec. 1.5]{hillman2012algebraic}).

Denote the set of almost-concordances in a closed, oriented $3$-manifold $Y$ by $\qconc$. We deduce, as a result of explicit computations (Sections \ref{sec:concrete} and \ref{sec:extension}), the following result establishing the non-triviality of the equivalence relation provided by almost-concordance:
\begin{thm}\label{teo:qconcnonbanalelens}
Each lens space $L(p,1)$ contains infinitely many almost-concordance classes of knots, so $| \widetilde{\mathcal{C}}^{L(p,1)}| = \infty$. Moreover, all these classes are represented by nullhomologous knots.
\end{thm}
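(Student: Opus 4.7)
The plan is to build an invariant of almost-concordance that distinguishes infinitely many nullhomologous knots in $L(p,1)$. Following the hint in the abstract, I would introduce a modified Ozsv\'{a}th-Szab\'{o}-Rasmussen $\tau$-invariant $\tau(Y,K,\mathfrak{s})$ attached to a nullhomologous knot $K$ in a rational homology sphere $Y$ and a $\spinc$ structure $\mathfrak{s}$ on $Y$, defined via the associated knot Floer chain complex restricted to the given $\spinc$-summand. Two properties must be verified: concordance invariance of $\tau(Y,\cdot,\mathfrak{s})$ in $Y$, and a connected sum formula $\tau(Y,K\#J,\mathfrak{s}) = \tau(Y,K,\mathfrak{s}) + \tau(J)$ for any $J\subset S^{3}$. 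Both should be adaptations of the classical arguments for the $S^{3}$-invariant, using the fact that connected summing with an $S^{3}$ knot does not interact with the ambient $\spinc$ structure on $Y$. From these two properties it follows that for any two $\spinc$ structures $\mathfrak{s},\mathfrak{s}'$ on $Y$ the quantity
\[
\Delta_{\mathfrak{s},\mathfrak{s}'}(K)\;:=\;\tau(Y,K,\mathfrak{s})-\tau(Y,K,\mathfrak{s}')
\]
is a genuine almost-concordance invariant, because connected sum with any $J\subset S^{3}$ shifts both summands by the same constant $\tau(J)$.

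With this obstruction available, the task reduces to exhibiting an infinite family $\{K_n\}_{n\in\N}$ of nullhomologous knots in $L(p,1)$ for which the numbers $\Delta_{\mathfrak{s}_{0},\mathfrak{s}_{1}}(K_n)$ take infinitely many distinct values, for some fixed pair $\mathfrak{s}_{0},\mathfrak{s}_{1}$. A natural route is to start from a single nullhomologous knot $K_{0}\subset L(p,1)$ that is \emph{not} contained in a $3$-ball (so that it genuinely sees the topology of the lens space, recalling that any ball-supported knot is almost-concordant to the unknot) and to build $K_n$ from it by iterated twisting, cabling, or crossing changes controlled by the parameter $n$. Grid-diagram presentations of knots in lens spaces, in the framework of Baker-Grigsby-Hedden and Plamenevskaya, give a concrete combinatorial model in which the $\spinc$ decomposition of the Heegaard Floer chain complex is explicit, and in which the corresponding values of $\tau(L(p,1),K_n,\mathfrak{s})$ can be computed directly, in analogy with \cite{sarkar2010grid}.

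The main obstacle I foresee is precisely showing that $\Delta_{\mathfrak{s}_{0},\mathfrak{s}_{1}}(K_n)$ is unbounded, or at least takes infinitely many values, along the chosen family. An explicit computation seems unavoidable: one either reads $\tau$ off a sufficiently tractable chain complex for each $n$, or establishes a skein-type relation describing how $\tau(L(p,1),\cdot,\mathfrak{s})$ changes between consecutive members of the family, with the key point being that the change differs in different $\spinc$ summands. Once the family is shown to realise infinitely many distinct values of $\Delta_{\mathfrak{s}_{0},\mathfrak{s}_{1}}$, the theorem is immediate: the $K_n$ represent pairwise non almost-concordant classes in $\widetilde{\mathcal{C}}^{L(p,1)}$ by the invariance established above, and they are nullhomologous by construction, yielding both clauses of the statement.
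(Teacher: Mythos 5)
Your strategy is the one the paper itself follows: the invariant you call $\Delta_{\mathfrak{s},\mathfrak{s}'}$ carries exactly the same information as the paper's shifted invariant $\tau_{sh}$ (Definition \ref{def:taushifted}), its almost-concordance invariance rests on precisely the two inputs you name (concordance invariance of each $\tau^{\mathfrak{s}}$, Theorem \ref{thm:cobo}, and Hedden's connected-sum formula, Theorem \ref{thm:tauadd}), and the examples are produced from grid diagrams in the sense of Baker--Grigsby--Hedden. So there is no disagreement of method.

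The gap is the one you flag yourself: you never exhibit the family $\{K_n\}$ nor verify that $\Delta_{\mathfrak{s}_0,\mathfrak{s}_1}(K_n)$ takes infinitely many values, and without that the theorem is not proved. The paper fills this in two steps. First, a base knot: an explicit grid computation (Example \ref{esempio:0134}) produces a nullhomologous $\widetilde{K}\subset L(3,1)$ with $\tau(\widetilde{K})=(-1,0,0)$, and an ``expansion'' of the grid together with explicit chain homotopies (Section \ref{sec:extension}, plus Remark \ref{rmk:l21} for $p=2$) yields a nullhomologous $\widetilde{K}_p\subset L(p,1)$ with $\tau^0\neq\tau^1$ for every $p\ge 2$. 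Second, the infinite family is obtained not by ad hoc twisting or crossing changes but by iterated cabling, via Hedden's formula (Theorem \ref{teo:cabling}): an $(m,-mn+1)$-cable satisfies $\tau^{\mathfrak{s}}(K_{m,-mn+1})=m\,\tau^{\mathfrak{s}}(K)+c$ with $c$ independent of $\mathfrak{s}$, so the difference $\tau^{\mathfrak{s}_0}-\tau^{\mathfrak{s}_1}$ is multiplied by $m$ at each stage; iterating (even with $m=2$ fixed) gives infinitely many distinct values of your $\Delta$, hence infinitely many pairwise non almost-concordant nullhomologous knots. If you want to complete your argument along your own lines, the cabling formula is the cleanest way to discharge the step you identified as the main obstacle, since a generic twisting family gives no control on how $\tau$ changes \emph{differently} across $\spinc$ summands, which is the only thing $\Delta$ sees.
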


Instead, in the case of $3$-manifolds with non-abelian fundamental group we obtain:
\begin{thm}\label{teo:qconcnonbanale}
All $3$-manifolds with non-abelian fundamental group have non-trivial almost-concordance classes.
\end{thm}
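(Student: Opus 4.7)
Both concordance in $Y\times I$ and connect sum with an $S^3$-knot preserve the free homotopy class of a knot in $Y$: the concordance annulus itself realizes the homotopy, and the connect sum is supported in a disjoint ball. Hence the conjugacy class $[K]\in\pi_1(Y)/\mathrm{conj}$ is an almost-concordance invariant. This observation alone only uses $\pi_1(Y)\neq 1$ and would produce multiple almost-concordance classes even in the abelian case, so the genuine content of the theorem is that \emph{within a fixed free-homotopy class}, non-abelian $\pi_1(Y)$ forces multiple almost-concordance classes.

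My plan is to exhibit such a pair using the non-commutativity of $\pi_1(Y)$. Pick non-commuting $a,b\in\pi_1(Y)$ and represent them by disjoint embedded loops $\alpha,\beta\subset Y$. Choose a baseline knot $K_0\subset Y\setminus(\alpha\cup\beta)$ realizing any target conjugacy class $[c]$ (including $[c]=1$ if one wants nullhomotopic representatives), and produce a second knot $K_1$ by a local modification of $K_0$: replace a small arc of $K_0$ with a Whitehead-type ribbon clasp that passes once around $\alpha$ and once around $\beta$. Because the clasp bounds a singular disk in $Y$, the knot $K_1$ remains freely homotopic to $K_0$; nonetheless the modification inserts the commutator $[a,b]$ into the peripheral structure of $K_1$.

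To distinguish $K_0$ and $K_1$, I would use the framed longitude $\lambda_{K_i}$ viewed in the metabelian quotient $\pi_1(Y\setminus K_i)/\pi_1(Y\setminus K_i)''$. The image of $\lambda_{K_1}$ picks up an extra factor arising from $[a,b]$, which remains non-trivial in the metabelian quotient by the non-abelian hypothesis. This invariant must then be shown to descend to almost-concordance. Under connect sum with $J\subset S^3$, the complement of $K\#J$ is the amalgamated free product $\pi_1(Y\setminus K)\ast_{\mathbb{Z}}\pi_1(S^3\setminus J)$ over the meridian; Wirtinger presentation forces $\pi_1(S^3\setminus J)$ to be cyclic in the metabelian quotient, so the longitude contribution of $J$ dies there and $\lambda_{K\#J}=\lambda_K$. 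Under a concordance $C\subset Y\times I$ from $K_0$ to $K_1$, the cylinder identifies the two peripheral tori and induces a map between the metabelian quotients of the complement sending $\lambda_{K_0}$ to $\lambda_{K_1}$, so any concordant pair has equal metabelian longitudes.

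The main obstacle is verifying the concordance invariance in the metabelian quotient: it requires analyzing $\pi_1((Y\times I)\setminus C)$, checking that the two peripheral longitudes determine the same class modulo the second derived subgroup, and handling framing ambiguity. The clasp construction also needs to be executed with enough care that the free homotopy class is genuinely preserved while the metabelian longitude shifts by a non-trivial commutator; once both steps are in place, the resulting pair $(K_0,K_1)$ proves the theorem for every $Y$ with non-abelian $\pi_1(Y)$.
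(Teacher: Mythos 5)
Your first paragraph already contains essentially the paper's entire proof, but you then talk yourself out of it by misreading what ``non-trivial'' means. In this paper, triviality of almost-concordance means having a single class for each \emph{homology} class $m\in H_1(Y;\Z)$ (this is the splitting of Equation \eqref{eq:splitting} and the discussion following Definition \ref{def:almconc}), not for each free homotopy class. Since a non-abelian $\pi_1(Y)$ contains a commutator $[a,b]\neq 1$, any embedded connected representative of $[a,b]$ is a nullhomologous knot whose conjugacy class is not that of the identity; it therefore lies in the same homology class as the unknot but in a different free homotopy class, and by the invariance you yourself state (Proposition \ref{prop:free} in the paper) it is not almost-concordant to the unknot, so $|\widetilde{\mathcal{C}}^Y_0|>1$. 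The non-abelian hypothesis is used precisely to make free homotopy a strictly finer invariant than homology on the nullhomologous component; no finer invariant is required.

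Your replacement argument --- distinguishing knots within a fixed free homotopy class via the longitude in the metabelian quotient --- proves a stronger statement than the theorem asks for, and it is not complete. You flag the missing step yourself (concordance invariance of the metabelian longitude), and that step is genuinely hard: the inclusion of a knot complement into the complement of a concordance is a $\Z$-homology equivalence, which controls nilpotent quotients (Stallings, Dwyer) but not derived quotients, so the asserted invariance does not follow from anything you wrote. Moreover, the claim that the Wirtinger presentation forces $\pi_1(S^3\setminus J)$ to be cyclic in the metabelian quotient is false --- that quotient is $\Z$ extended by the Alexander module; what is true, and what you actually need, is that the longitude of $J$ lies in the second derived subgroup and hence dies there. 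As written the proposal does not establish the theorem; promoting your first paragraph from a preliminary observation to the main argument does, and coincides with the paper's proof.
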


Theorem \ref{teo:qconcnonbanalelens} is established by defining the \emph{$\tau$-shifted invariant} $\tau_{sh}$ for knots in lens spaces, which is derived by the usual Ozsv{\'a}th-Szab{\'o}-Rasmussen's $\tau$-invariant.

We then make use of Hedden's generalisation \cite{hedden2008ozsvath} of the $\tau$ invariants to extend the definition of $\tau_{sh}$ to a much larger set of $3$-manifolds (Definition \ref{taushiftgener}).
We prove that $\tau_{sh}$ is unchanged under the previously defined action:
\begin{prop}\label{proptaushinv}
The $\tau$-shifted invariant $\tau_{sh}$ is an almost-concordance invariant of knots.
\end{prop}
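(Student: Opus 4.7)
The plan is to verify invariance of $\tau_{sh}$ under the two elementary moves that generate the almost-concordance equivalence: a smooth concordance inside $Y$, and a connect sum with a knot $J \subset S^3$.

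First I would handle concordance invariance in $Y$. A smooth concordance $C \subset Y \times I$ from $K_0$ to $K_1$ gives a genus-zero cobordism $(Y \times I, C)$ whose induced map on $\CFKoo$, in the $\spinc$-structure restricted from $Y \times I$, is a filtered quasi-isomorphism; this is the standard Ozsv\'ath--Szab\'o argument, which carries over verbatim in Hedden's setting for nullhomologous knots in more general $3$-manifolds. Hence $\tau$ is preserved, and since the shift appearing in Definition \ref{taushiftgener} depends only on the ambient pair $(Y, \mathfrak{t})$ and not on $K$, the same is true of $\tau_{sh}$.

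Second I would handle the connect sum with $J \subset S^3$. Because $J$ can be arranged to lie inside a $3$-ball disjoint from $K$, the standard Künneth formula yields a filtered quasi-isomorphism
\[
\CFKoo(Y, K \# J, \mathfrak{t}) \simeq \CFKoo(Y, K, \mathfrak{t}) \otimes_{\F[U, U^{-1}]} \CFKoo(S^3, J),
\]
from which one extracts the additivity $\tau(K \# J, \mathfrak{t}) = \tau(K, \mathfrak{t}) + \tau(J)$. By design, the shift in Definition \ref{taushiftgener} is exactly the baseline needed to absorb this $\tau(J)$ contribution: once one checks that the baseline quantity being subtracted transforms in the same additive way under connect sum with $J$, the two $\tau(J)$ terms cancel and $\tau_{sh}(K \# J) = \tau_{sh}(K)$.

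The main technical obstacle is in the second step: I need to know that Hedden's $\CFKoo$ for a knot in $Y$ genuinely splits as a tensor product when $K$ is modified inside a $3$-ball, in the correct $\spinc$-structure (which is forced, since connect-summing in a ball does not alter the induced structure on $Y$). Once this Künneth statement is in place, the two invariances combine formally to show that $\tau_{sh}$ descends to a well-defined function on $\qconc$, proving the proposition.
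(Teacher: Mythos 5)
Your proposal is correct and follows essentially the same route as the paper: concordance invariance of the tuple $\tau(K)$ (Theorem \ref{thm:cobo} and its corollary) plus Hedden's additivity under connected sum with knots in $S^3$ (Theorem \ref{thm:tauadd}), with the normalization in Definition \ref{def:taushifted} absorbing the uniform shift by $\tau(J)$. One small correction: the shift $n$ in Definition \ref{def:taushifted} equals $-\min_{\s}\tau^{\s}(K)$ and therefore does depend on $K$, but this is harmless for the first step since the entire tuple $\tau(K)$ is preserved under concordance, and your second step already treats this baseline as transforming additively, which is the point that makes the cancellation work.
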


The proof of Theorem \ref{teo:qconcnonbanalelens} then follows from the computation of $\tau_{sh}$ for a knot $\widetilde{K} \subset L(3,1)$ (Example \ref{esempio:0134}), coupled with a sequence of chain homotopies described in Section \ref{sec:extension}.

Theorems \ref{teo:qconcnonbanalelens} and \ref{teo:qconcnonbanale} can be equivalently stated in terms of non-transitivity of the concordance action; the same result in the case of the 3-torus is a direct consequence of D. Miller's paper \cite{miller1995extension} on Milnor's invariants.\\

Following a remark of A.Levine, we relate almost-concordance to $PL$-concordance (Definition \ref{def:plcobo}), define some generalisations of the slice genus, and establish the following inequality:
\begin{thm}\label{thm:plcobotau}
Suppose $K_0$ and $K_1$ are two knots in a lens space $L(p,q)$, connected by a PL-cobordism $\Sigma$. Then
$$D(\tau(K_0), \tau(K_1) )  \le \widetilde{g}_{PL}(\Sigma),$$
where $D$ is the distance on $\Z^p$ described in Definition \ref{def:distanzalattice}, and $\widetilde{g}_{PL}$ is the (cobordism) $PL$-genus of a knot (Definition \ref{defigeneri}).
\end{thm}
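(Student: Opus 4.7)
My plan is to adapt Sarkar's proof of the analogous inequality for knots in $S^3$ \cite{sarkar2010grid}, using as inputs a smooth cobordism inequality for $\tau$ in $L(p,q) \times I$ together with a singularity resolution at the non-locally-flat points of $\Sigma$.

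The first step is to establish the smooth version: if $\Sigma \subset L(p,q) \times I$ is a smoothly embedded orientable cobordism from $K_0$ to $K_1$, then $D(\tau(K_0), \tau(K_1)) \le g(\Sigma)$. The approach is to decompose $\Sigma$ into elementary handle attachments and control, after each elementary piece, how each of the $p$ coordinates of the $\tau$-vector moves, together with the induced permutation of $\spinc$ structures. The distance $D$ of Definition \ref{def:distanzalattice} is designed to measure exactly such combined shifts, and summing the contributions over the handle decomposition yields the bound $g(\Sigma)$.

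The second step reduces the PL case to the smooth one. At each non-locally-flat singular point $p_i$ of $\Sigma$, a small neighbourhood is a cone on a knot $J_i \subset S^3$; excising this neighbourhood and filling it in with a smoothly embedded surface in $B^4$ of minimal (slice) genus produces a smooth cobordism $\Sigma'$ between the same $K_0$ and $K_1$ whose genus is bounded by $\widetilde{g}_{PL}(\Sigma)$ according to Definition \ref{defigeneri}. Applying the smooth inequality of the first step to $\Sigma'$ then concludes the argument.

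The main obstacle is the smooth cobordism inequality itself. Unlike the $S^3$ setting, where $\tau$ is a single integer bounded by the slice genus, here one must simultaneously track all $p$ components of $\tau$ and understand how the cobordism maps on the underlying Heegaard Floer complexes act on filtration levels while possibly permuting $\spinc$ structures under non-trivial handle attachments. Verifying that $D$ precisely absorbs both the coordinate-wise shifts and the re-indexing, and in particular that the re-indexing does not inflate the distance beyond a unit contribution per handle, is the technical crux of the proof.
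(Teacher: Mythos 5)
Your reduction of the PL case to the smooth case (your step 2) has a genuine gap. If you excise a cone neighbourhood of a singular point whose link is a knot $J_i \subset S^3$ and glue in a minimal-genus smooth surface in $B^4$ bounded by $J_i$, the resulting smooth cobordism $\Sigma'$ has genus $g(\Sigma) + \sum_i g_*(J_i)$, not $g(\Sigma)$: the slice genera of the links of the singular points are not controlled by the genus of the PL surface (they can be arbitrarily large), and Definition \ref{defigeneri} does not charge them to $\widetilde{g}_{PL}$. Your argument therefore only yields $D(\tau(K_0),\tau(K_1)) \le g(\Sigma) + \sum_i g_*(J_i)$, which is strictly weaker than the claimed inequality. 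The correct resolution --- the one the paper uses, via the equivalence of PL-concordance and almost-concordance discussed after Definition \ref{def:plcobo}, together with Remark \ref{rmk:solouno} --- is to connect each singular point to one of the two boundary knots by an arc on $\Sigma$ and carve out a neighbourhood of that arc. This produces a \emph{smooth} cobordism of the \emph{same} genus $g(\Sigma)$, but between the modified knots $K_0 \# K_0^\prime$ and $K_1 \# K_1^\prime$, where each $K_i^\prime$ is a connected sum of the links of the resolved singularities. The whole point of the distance $D$ of Definition \ref{def:distanzalattice} is that it is blind to these extra summands: by Theorem \ref{thm:tauadd}, connected sum with a knot in $S^3$ shifts every component of the $\tau$-vector by the same amount, and $D$ is computed after subtracting the first coordinate, i.e.\ precisely on the quotient by uniform shifts. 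One then concludes with the componentwise bound of Theorem \ref{thm:cobo} applied to the smooth cobordism.

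Your step 1 is also aimed at the wrong difficulty. There is no $\spinc$ permutation to track: the ambient four-manifold is the product $L(p,q)\times[0,1]$, each $\spinc$ structure extends canonically over it, and Theorem \ref{thm:cobo} (which the paper cites rather than re-proves via handle decompositions) already gives $|\tau^{\s}(K_0)-\tau^{\s}(K_1)|\le g(\Sigma)$ separately for every $\s$ when $\Sigma$ is smooth. The actual crux of the theorem is the interplay between the singularity resolution, the additivity statement of Theorem \ref{thm:tauadd}, and the shift-invariance built into $D$ --- none of which appears in your outline.
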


As an aside, we generalize a result of Kirby and Lickorish \cite{kirby1979prime}, showing in Theorem \ref{thm:conctogenuine} that every knot in a $3$-manifold is concordant to a \emph{l-prime} knot (Definition \ref{def:genuino}).

Lastly we present some results on local knots, and outline possible improvements and future directions with some conjectures.\\

\textbf{Acknowledgments:} The author wishes to thank Adam Levine for helpful comments on the first version of this paper, Marco Golla for his invaluable expertise, Paolo Lisca and Paolo Aceto for useful observations and corrections, and Agnese Barbensi for her constant support. Also, the results of Section \ref{sec:fund} would not have been possible without the helpful remarks of Eylem Zeliha Yildiz and Patrick Orson. A special thanks also to Mark Powell and Kent Orr for  suggesting and improving some references, making me learn new topics in the process.	

This project has received funding from the European Research Council (ERC) under the European Union's Horizon 2020 research and innovation programme (grant agreement No 674978).

\section{Definitions}\label{sec:defi}

In the following $Y$ is always going to denote a closed, connected  and oriented $3$-manifold. None of these conditions is actually strictly needed, but will encompass all relevant examples that will follow.\\
A knot in $Y$ is the ambient isotopy class of a smooth embedding $\iota : S^1 \hookrightarrow Y$, and  $\mathcal{K}(Y)$ will denote the set of oriented knots in $Y$. To avoid confusion we will often use the pair $(Y,K)$ to denote $K \in \nodi$.

In every such $Y$ there is a unique knot bounding an embedded disk, the unknot, denoted by $\bigcirc$. Given a knot $(Y,K)$, call $[K] \in H_1 (Y; \Z)$ the homology class it represents; if $[K] = 0$, we say it is \emph{nullhomologous}, and \emph{rationally nullhomologous} if it represents a torsion element of $H_1 (Y; \Z)$.

A nullhomologous knot is the boundary of embedded surfaces in $Y$, while in all other cases it is not\footnote{However there are several ways to define Seifert surfaces for rationally nullhomologous knots, see \cite{calegari2009knots} and \cite{rasmussen2007lens}.}.
Given two oriented knots $(Y_i,K_i)$ for $i=0,1$, we can consider their \emph{connected sum} $(Y_0 \# Y_1, K_0 \# K_1)$, given by removing  from each $Y_i$ a $3$-disk $D_i$ intersecting $K_i$ in an unknotted arc, and glueing with an orientation reversing diffeomorphism $Y_0 \setminus D_0$ to $Y_1 \setminus D_1$, matching the orientations of the knots.

A knot $(Y,K)$ is said to be \emph{local} if there exists an embedded 2-sphere in $Y$ bounding a $3$-ball $B$, such that $K \subset B$. Clearly a local knot is nullhomologous, but the converse is generally far from true (cf. example \ref{esempio:0134}). Alternatively one could define local knots as those which admit a decomposition of the form $(Y,\bigcirc) \# (S^3, K)$.

Besides isotopy, there is another weaker equivalence relation we can consider on the set of embeddings $S^1 \hookrightarrow Y$:
\begin{defi}
We say that two knots $K_0, K_1 \subset Y$ are \emph{concordant} if there exists a smooth properly embedded annulus $$A \cong S^1 \times [0,1] \hookrightarrow Y \times [0,1],$$ such that $\partial A \cap \left(Y \times [0,1] \right) =  K_0 \sqcup \overline{K}_1$. If $K_0$ is concordant to $K_1$ we write $K_0 \sim K_1$.
Concordance is an equivalence relation on $\nodi$; the set of equivalence classes is denoted by $\conc$.
\end{defi}
Concordances preserve homology classes. In other words if $K_0 \sim K_1$, then $[K_0] = [K_1]$. This implies that $\conc$ splits:
\begin{equation}\label{eq:splitting}
\conc = \bigoplus_{m \in H_1 (Y;\Z)} \concom{m} .
\end{equation}

If $Y = S^3$, we can endow $\concsf \coloneqq \mathcal{C}^{S^3} $ with a group structure. The operation is provided  by the oriented connected sum, and the inverse of a class $[K]$ is represented by the reverse mirror of $K$.

The structure of this usual concordance group, despite its importance in low-dimensional topology, still remains elusive. Recently many improvements in the understanding of $\mathcal{C}$ were made by several authors, especially by means of Heegaard Floer theoretic constructions.

For an excellent survey on concordances in $S^3$ see \cite{concordanceliv}, and for a recent survey on the interactions between Heegaard Floer homology and $\mathcal{C}$ see \cite{2015arXiv151200383H}.\\

It is clear from the definition that the connected sum of knots does not preserve the ambient manifold, so it does not provide a binary operation on $\conc$ whenever $Y \neq S^3$. Hence $\conc$ has no natural group operation, and is only a set. So there seems to be a total  loss of algebraic structure when dealing with a manifold other than $S^3$.

There is however a natural action  $\mathcal{K}(S^3) \curvearrowright \conc$ which respects the splitting into homology components. \\This action is simply defined as:
\begin{equation}\label{azione}
(S^3,K) \cdot [(Y, K^\prime)] = [(Y, K \# K^\prime)]
\end{equation}

\begin{rmk}
This action is well defined. That is, if $K_0,K_1 \in \nodi $ and $K_0 \sim K_1$, then for each knot  $(S^3,K)$ we have $K_0 \# K \sim K_1 \# K$. To see why this is the case, denote by $A$ a concordance between $K_0$ and $K_1$. Choose a simple properly embedded arc $a$ on $A$, such that the endpoints are on the two knots, and $a$ does not intersect any critical point\footnote{For the restriction to $A$ of the Morse function induced by the projection $S^3 \times [0,1] \rightarrow [0,1]$.} of $A$ (Figure \ref{fig:cactus}).
\begin{figure}
\includegraphics[width=6cm]{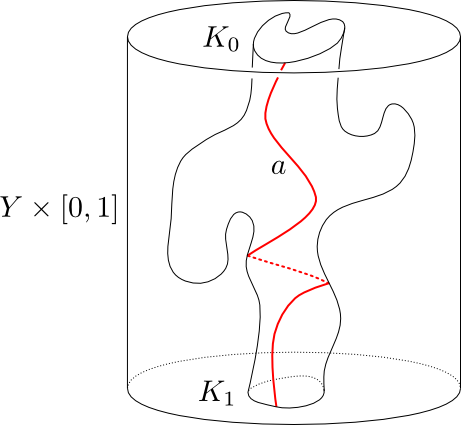}
\caption{The path $a$ on a concordance in $Y\times [0,1]$ between $K_0$ and $K_1$.}
\label{fig:cactus}
\end{figure}

Consider the product $K \times [0,1] \subset S^3 \times [0,1]$, and remove from it $D \times [0,1]$, where $D$ is a $3$-disk intersecting $K$ in an unknotted arc. Now remove a small tubular neighborhood $\nu (a)$ of $a$, and replace it with $\left( S^3 \setminus D \right) \times [0,1]$, making the boundaries of $\left( K \setminus (K \cap D) \right) \times [0,1]$ and $\left( A \setminus (\nu(a) \cap A) \right)$ coincide. The result is a concordance from $K \# K_0$ to $K \# K_1$.
\end{rmk}

Moreover this action factors through the concordance group $\mathcal{C}$:
\begin{prop}\label{prop:buonadef}
If $K_0, K_1 \in \mathcal{K}(S^3)$ and $K_0 \sim K_1$, then for each knot $(Y,K)$:
$$(S^3,K_0) \cdot [(Y, K)] \sim (S^3,K_1) \cdot [(Y, K)]$$
\end{prop}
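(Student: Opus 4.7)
The plan is to replay the construction of the preceding remark, with the roles of $S^3$ and $Y$ interchanged. Let $A \subset S^3 \times [0,1]$ be a concordance between $K_0$ and $K_1$. The goal is to produce a concordance $A' \subset Y \times [0,1]$ between $K \# K_0$ and $K \# K_1$, which will witness $(S^3, K_0) \cdot [(Y,K)] \sim (S^3, K_1) \cdot [(Y,K)]$.

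First I would pick a simple properly embedded arc $a \subset A$ from a point of $K_0$ to a point of $K_1$, avoiding the critical points of the height function $A \to [0,1]$, and choose a tubular neighborhood of $a$ in $S^3 \times [0,1]$ of the form $D \times [0,1]$, where $D \subset S^3$ is a $3$-ball meeting each $K_i$ in an unknotted arc. The complement $A \setminus (D \times [0,1])$ is a disk, bounded by subarcs of $K_0$ and $K_1$ together with two vertical arcs on $\partial D \times [0,1]$.

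On the $Y$-side, I would take the product cobordism $K \times [0,1] \subset Y \times [0,1]$, pick a $3$-ball $D' \subset Y$ meeting $K$ in an unknotted arc around some point $p$, and remove the vertical tube $D' \times [0,1]$ around $\{p\} \times [0,1]$. What remains of the product cobordism is again a disk. Gluing $(S^3 \times [0,1]) \setminus (D \times [0,1])$ into the resulting $S^2 \times [0,1]$-shaped hole in $Y \times [0,1]$ via an orientation-reversing diffeomorphism recovers $(Y \# S^3) \times [0,1] \cong Y \times [0,1]$, and the two disks fit together along the corresponding boundary arcs into an embedded surface $A'$.

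Finally I would verify that $A'$ is a concordance: its boundary on $Y \times \{0\}$ is the arc of $K$ outside $D'$ joined to the arc of $K_0$ outside $D$, which is precisely $K \# K_0$, and similarly $K \# K_1$ sits on $Y \times \{1\}$. An Euler-characteristic count (two disks glued along two arcs) gives $\chi(A') = 1 + 1 - 2 = 0$, and since $A'$ is connected with two boundary circles it is an annulus. The principal subtlety is performing the gluing smoothly: the assumption that $a$ avoids critical points of the height function is precisely what allows $\nu(a)$ to be taken as a genuine product $D \times [0,1]$, making the gluing with the product $D' \times [0,1]$ on the $Y$-side consistent at every level of $[0,1]$.
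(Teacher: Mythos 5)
Your proposal is correct and follows essentially the same route as the paper: both arguments straighten the concordance $A\subset S^3\times[0,1]$ so that it is a vertical product over an arc joining $K_0$ to $K_1$, excise the vertical tube $D\times[0,1]$, and glue the resulting $\D^3\times[0,1]$ into the complement of a vertical tube $D'\times[0,1]$ in the trivial concordance $K\times[0,1]\subset Y\times[0,1]$. The only cosmetic difference is that the paper justifies the product normalization by citing \cite[Thm 3.3.2]{concordanceliv}, whereas you derive it from the choice of an arc on $A$ avoiding the critical points of the height function; the Euler characteristic check at the end is a nice explicit touch but not a change of method.
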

\begin{proof}
Denote by $A \subset S^3 \times [0,1]$ an annulus realizing the concordance between $K_0$ and $K_1$. Then, as in \cite[Thm 3.3.2]{concordanceliv} we can suppose that, up to isotopy, $A$ is the product $a \times [0,1]$ for a small arc $a \subset K_0$. Remove from $S^3 \times [0,1]$ the product $D \times [0,1]$, where $D$ is a $3$-disk intersecting $K_0$ only in $a$; the complement is diffeomorphic to $\D^3 \times [0,1]$.

Take the trivial concordance $K \times [0,1] \subset Y \times [0,1]$ and remove a product $D^\prime \times [0,1]$, where $D^\prime$ is a $3$-disk intersecting $K$ in an unknotted arc. Then we just need to glue $S^3 \setminus D \times [0,1]$ to $Y \setminus D^\prime \times [0,1]$, in such a way  that the two concordances are glued along their vertical  boundaries\footnote{By vertical boundary we mean the part created by removing the intersection with the disks $D$ or $D^\prime$ times $[0,1]$.}, making the edges of the annuli coincide: the resulting annulus is a concordance from $K_0 \# K$ to $K_1 \# K$ in $Y \times [0,1]$.
\end{proof}

So we have in fact an action $\mathcal{C} \curvearrowright \conc$, which is easily seen to preserve the splitting of Equation \eqref{eq:splitting}.\\

We can introduce yet another equivalence relation on $\nodi$, by taking this $\mathcal{C}$-action into account:
\begin{defi}\label{def:almconc}
Two knots $K_0$ and $K_1$ in $Y$ are \emph{almost-concordant}, written $K_0 \dot{\sim} K_1$, if there exist two knots $K_0^\prime , K_1^\prime \subset S^3$ such that
$$K_0 \# K_0^\prime \sim K_1 \# K_1^\prime .$$
Almost-concordance is an equivalence relation on $\nodi$, and we denote by$\;\qconc$ the quotient.
\end{defi}

Clearly two concordant knots are also almost-concordant (just choose $K_0^\prime = K_1^\prime = \bigcirc$), but the converse does not hold. In particular this means that almost-concordance classes are unions of concordance classes. It is thus natural to ask whether this relation is trivial, \emph{i.e.} if there is only one almost-concordance class of knots for each pair $(Y,m)$, with $m \in H_1(Y;\Z)$. As anticipated by Theorem \ref{teo:qconcnonbanale} this is not always the case.

\begin{rmk}\label{rmk:solouno}
Note that we might also only use one knot in the definition: if $K_0 \# K_0^\prime \sim K_1 \# K_1^\prime$ then $K_0 \sim K_1 \# K^\prime$ where $K^\prime = K_1^\prime \# r\overline{K_0^\prime}$.
\end{rmk}

\begin{rmk}
It is immediate from the definition that  $|\widetilde{\mathcal{C}}^{S^3}| = 1$, that is all knots in the three-sphere are almost-concordant to each other.
In the next section we are going to outline a way to obstruct the existence of almost-concordances, after introducing a new invariant $\tau_{sh}$ capable of distinguishing them.
\end{rmk}

Shortly after the first version of this paper appeared online, A. Levine proved that almost-concordance is in fact equivalent to the older notion of $PL$-concordance; the same statement was in fact proved in greater generality by Rolfsen in \cite{rolfsen85}.
\begin{defi}\label{def:plcobo}

Two knots $K_0, K_1 \in \nodi$ are $PL$-concordant, if they are connected by a properly embedded annulus in the product cobordism $Y \times [0,1]$, which is everywhere smooth except for a finite number of singular points which are cones over knots in $S^3$.

$PL$-cobordisms are defined in an analogous manner, allowing the singular surface cobounding the knots to have non-zero genus.
\end{defi}

The equivalence between almost-concordance and $PL$-concordance goes as follows: given a $PL$-concordance between $K_0$ and $K_1$, we can connect each singular point on the concordance to either knot with simple non-intersecting arcs. Carving out a small neighborhood of these arcs yields a concordance between the original knots with extra connect-summands given by the link of the singular points.

Conversely, given an almost-concordance, we can push $K_0^\prime$ and $K_1^\prime$ inside the cobordism, capping them off with their cones.

In \cite{levine2014non} Levine proves the existence of a non-surjective satellite operator, and uses it to exhibit a knot in an homology $3$-sphere $Y$ which does not bound a PL-disk in any contractible $4$-manifold cobounding $Y$.

In Section \ref{sec:concrete} we are going to produce an invariant of almost-concordance for knots in lens spaces, which by Levine's remark is also an invariant of $PL$-concordance. We are also going to show that this invariant bounds (in an appropriate setting) from below the minimal $PL$-genus of a surface cobounding two knots.

\begin{defi}\label{defigeneri}
One can define several kinds of $4$-dimensional genera for a knot $K \in \nodi$; given a $3$-manifold $Y$ and a contractible $4$-manifold $W$ such that $\partial W = Y$, one can define the \emph{smooth slice genus} $g_{*}$, the \emph{PL genus} $g_{PL}$, the \emph{topological $4$-genus} $g_{TOP}$ and the \emph{topological PL $4$-genus } $g_{TOPL}$. They are defined as follows:
\begin{equation}
g_{\circ}^W (K)= \min \{g(\Sigma) \;|\; (\Sigma ,\partial \Sigma) \xhookrightarrow{\iota} (W,Y), \partial \Sigma = K \}
\end{equation}
With $\circ \in \{*, TOP, PL, TOPL\}$, and the corresponding embedding $\iota$ is required to be smooth if $\circ = *$, topological locally flat if $\circ = TOP$, smooth\slash locally flat except for a finite number of points which are cones over knots in $S^3$ if $\circ = PL$ or $TOPL$ respectively.

One can also define related genus by taking the minimum of the previous quantities over all $4$-manifolds $W$ cobounding a fixed $Y$, with some restriction on the algebraic topology of these fillings.

In what follows however we will be more concerned with yet another, slightly different notion of $4$-dimensional genus for knots; suppose the homology class of $K \in \nodi$ contains a ``standard" representative $T_{[K]}$. In the case where $[K] = 0 \in H_1(Y;\Z)$ one can always choose the unknot $\bigcirc$, but this is not the only possibility: in the case of lens spaces one has the simple knots $K(p,q,k)$ (in the notation of \cite{rasmussen2007lens}), which share many interesting properties with the unknot (cf. \cite{rasmussen2007lens}, and \cite{hedden2011floer}), despite being only rationally nullhomologous.

We can then define the genera
\begin{equation}\label{generitilde}
\widetilde{g}_{\circ} (K) = \min\{g(\Sigma) \;|\; \Sigma \subset Y \times [0,1], \partial \Sigma = K \sqcup \overline{T}_{[K]} \},
\end{equation}
where again $\circ$ belongs to the set $\{*, TOP, PL, TOPL\}$, and the surface $\Sigma$ is embedded in the trivial cobordism $Y\times [0,1]$ accordingly (\emph{i.e.}$\:$in the appropriate category).

We will refer to each of the $\widetilde{g}_{\circ} (K)$ genera as the \emph{$\circ$-cobordism genus} of the knot $K$.
\end{defi}

Note that in the $3$-sphere case there is no difference between $\widetilde{g}_{\circ}$ and $g_\circ$, since removing a $4$-ball from the interior of $\D^4$ yields $S^3 \times [0,1]$.

For a fixed filling $W$ (which is omitted from the notation), there are some obvious relations between these genera:
\begin{align*}
g_{TOPL} (K) \le g_{PL} (K) \;\;\;\;&\;\;\;\; g_{TOPL} (K) \le g_{TOP} (K) \\
g_{PL} (K) \le g_*(K) \;\;\;\;&\;\;\;\; g_{TOP} (K) \le g_*(K),
\end{align*}
and $g_{TOPL} = g_{PL} = 0$ for all knots in $S^3$.

It would be interesting to determine if there is a relation between $g_{TOP}$ and $g_{PL}$.
Note that if a knot has minimal PL-genus greater than $0$, then in particular, it is not concordant in a homology cobordism to any knot in $S^3$ (cf. \cite{levine2014non}).

The following elementary inequality always holds for $K \in \nodi$ and any choice of $\circ$ as above (after picking a suitable $T_{[K]}$ where available):
\begin{equation}
g_{\circ} (K) \le \widetilde{g}_{\circ} (K)  + g_{\circ}(T_{[K]}).
\end{equation}
\newline
The notion of almost-concordance is closely related to primeness of a knot. The definition makes sense also in a $3$-manifold other than $S^3$:%&
\begin{defi}\label{def:genuino}
Call a knot $(Y, K)$ \emph{l-prime} if it is not a connected sum with a knot in $S^3$, so there is no embedded\footnote{The boundary of a ball as such is usually called a \emph{Conway sphere}.} $3$-ball $B$ intersecting $K$ non trivially. By triviality of the intersection we mean that the pair $(B, K \cap B)$ is isotopic (relatively to the boundary) to the pair $(\mathbb{D}^2 \times \mathbb{D}^1 , \{0\}\times \mathbb{D}^1 )$.
In other words $(Y,K)$ is l-prime \emph{iff} for every decomposition $(Y,K) = (Y,K_0) \# (S^3,K_1)$ we have $K_1 = \bigcirc$.
\end{defi}

It was first proven by Kirby and Lickorish in \cite{kirby1979prime} that every knot in $S^3$ is concordant to a prime knot. Using the same argument of Livingston \cite{livingston1981homology}, we can obtain a generalization of their result to arbitrary  $3$-manifolds. To the best of the author's knowledge this result (despite being an almost straightforward generalisation of the techniques of \cite{livingston1981homology}) does not appear in the literature, so we include it for the sake of completeness.

\begin{thm}\label{thm:conctogenuine}
Every knot $K \subset Y$ is concordant to an l-prime knot.
\end{thm}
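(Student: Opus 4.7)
The plan is to follow Livingston's argument from \cite{livingston1981homology}, which for knots in $S^3$ is local in nature and therefore generalises to arbitrary $Y$ with only cosmetic changes.

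First, given $K \subset Y$, I pick a small embedded $3$-ball $B \subset Y$ meeting $K$ transversally in a single unknotted arc $a$. Inside $B$ I replace $a$ with a $1$-tangle $T$ whose closure in $B$ (by capping off $\partial T$ with an arc on $\partial B$) is a specific prime knot $J \subset S^3$, chosen as in \cite{livingston1981homology} so that $T$ is concordant in $B \times [0,1]$ to the original trivial arc $a \times \{0\}$. Let $K'$ be the resulting knot in $Y$. By construction, the annulus that is the product $K \times [0,1]$ outside $B \times [0,1]$, together with the local concordance inside $B \times [0,1]$, realises a concordance $K \sim K'$ in $Y \times [0,1]$; this is essentially an instance of the $\mathcal{C}$-action from Proposition \ref{prop:buonadef}.

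Next, to prove that $K'$ is l-prime, I argue by contradiction. Suppose $K'$ splits as $(Y, K') = (Y, K_0) \# (S^3, K_1)$ with $K_1 \neq \bigcirc$, witnessed by a Conway $2$-sphere $S \subset Y$ bounding a ball $B^* \subset Y$ for which $(B^*, B^* \cap K')$ is a nontrivial ball-arc pair. An innermost-disk argument applied to the circles of $S \cap \partial B$ lets me arrange $S$ in standard position with respect to $B$, so that $B^*$ and $B$ become either disjoint or nested. In either case both balls lie inside a single ambient chart of $Y$ diffeomorphic to an open subset of $S^3$, and the obstruction becomes a purely $S^3$-tangle question about the pattern $J$ inserted in $B$. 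Schubert's unique prime decomposition, combined with the primeness of Livingston's $J$ and the specific way $T$ sits inside $B$, then forces $K_1 = \bigcirc$, giving the desired contradiction.

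The main obstacle is this sphere-positioning step: I must guarantee that any hypothetical decomposing $3$-ball $B^*$ for $K'$ can be isotoped into a configuration sharing a common ambient $S^3$-chart with the modification ball $B$. This is carried out by resolving all circles of $S \cap \partial B$ via innermost disks in $\partial B^*$ and $\partial B$ --- possible because both spheres bound $3$-balls in $Y$ --- reducing the question to one about $3$-balls inside $\mathbb{R}^3$. Once this local reduction is in place, the remainder of the proof is a verbatim transcription of Livingston's $S^3$-argument, since the topology of $Y$ no longer enters.
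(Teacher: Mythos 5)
Your construction does not produce an l-prime knot, so the approach fails at its core. If you replace an unknotted arc of $K$ inside a small ball $B$ by a tangle whose closure is a prime knot $J$, the resulting knot is precisely the connected sum $K' = K \# J$ with $J \subset S^3$: the ball $B$ itself is an embedded $3$-ball meeting $K'$ in a knotted ball-arc pair, i.e.\ a Conway sphere exhibiting a decomposition with $K_1 = J \neq \bigcirc$. By Definition \ref{def:genuino} this is exactly what an l-prime knot must \emph{not} admit, so $K'$ is manifestly not l-prime (you have conflated ``prime'' with ``connected sum with a prime knot''). No amount of sphere-positioning or appeal to Schubert's decomposition can rescue this, since the offending decomposing ball is the one you built the knot with. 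You have also misremembered Livingston's argument: it is not a local (wrapping number $1$, realized in a ball) tangle replacement but a genuine satellite construction.

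The paper's proof instead takes a pattern $P$ in the solid torus $S^1 \times \mathbb{D}^2$ with two key properties: $P$ is prime \emph{as a knot in the solid torus} (no Conway sphere inside $S^1\times\mathbb{D}^2$ splits off a nontrivial summand), and $P$ meets every meridian disk in at least $3$ points. One forms the satellite $K_P$ by gluing this solid torus into $\nu(K)$; a band attachment and capping give a concordance $K_P \sim K$ supported in $S^1\times\mathbb{D}^2\times[0,1]$. To see $K_P$ is l-prime, one takes any decomposing sphere $S$, removes neighborhoods of the two points of $S \cap K_P$ to get an annulus $R$, and studies $R \cap \partial\nu(K)$: circles nullhomologous in $R$ are removed by innermost-disk isotopies, and circles parallel to $\partial R$ would yield a disk meeting the meridian of $\partial\nu(K)$ once and the pattern too few times, contradicting the intersection-number-$3$ property. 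Hence $S$ can be isotoped into $\nu(K)$, where primeness of $P$ in the solid torus finishes the argument. The wrapping-number condition is exactly the ingredient your local construction lacks, and it is what prevents the decomposing sphere from being pushed into a single ball.
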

\begin{proof}
Consider the knot $P\subset S^1 \times \mathbb{D}^2$ shown in Figure \ref{fig:dimgenuini}.
\begin{figure}
\includegraphics[width=9cm]{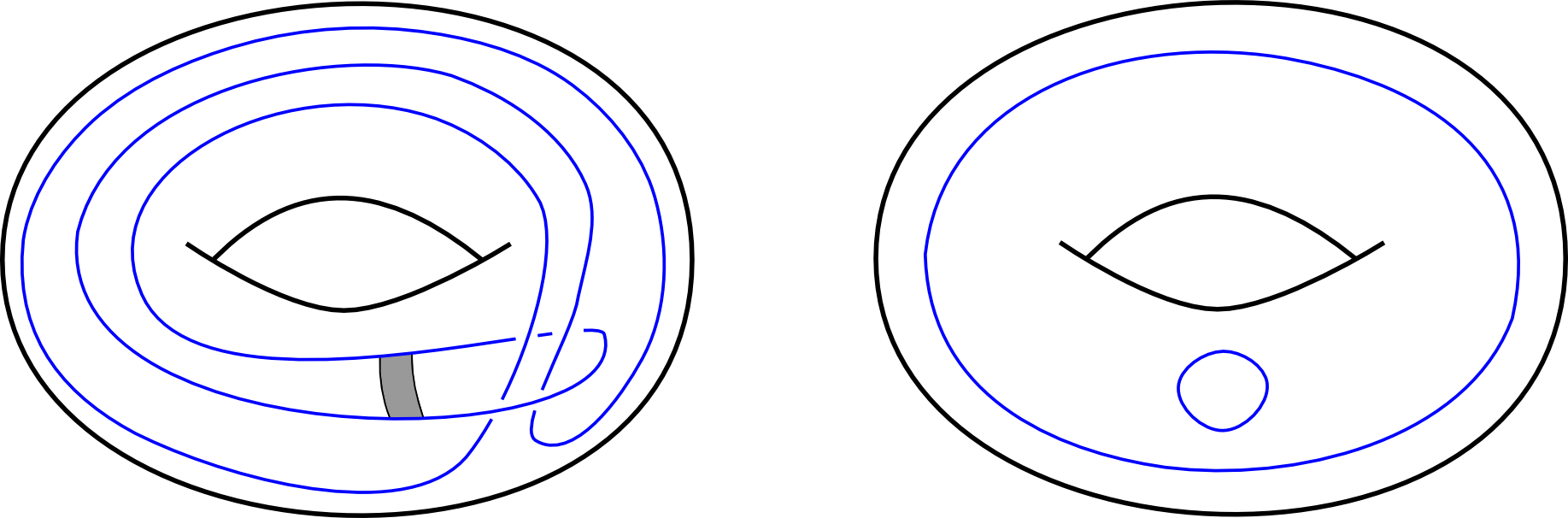}
\caption{The pattern $P$ for the satellite construction. Attaching the gray band, and capping the nullhomologous component, yields a concordance in $S^1 \times \mathbb{D}^2 \times [0,1]$ between $P$ and the core of the solid torus.}
\label{fig:dimgenuini}
\end{figure}

Given any knot $(Y, K)$, we can remove a tubular neighborhood $\nu(K)$ and glue in\footnote{We do not need to specify the framing with respect to which we are attaching the solid torus, since the result holds for any choice.} the solid torus containing the pattern, obtaining a new knot $(Y , K_P)$, the satellite of $K$ with pattern $P$.  The concordance suggested in Figure \ref{fig:dimgenuini} induces a concordance from $K_P$ to $K$. Now we just need to check that $K_P$ is in fact l-prime; %&
we start by noting that the pattern inside the solid torus is prime\footnote{This is proven in a more general setting in \cite{livingston1981homology}.}, \emph{i.e.}$\:$it cannot be split into non-trivial knots by a Conway sphere. Then we just need to argue by contradiction that any sphere giving a decomposition of $K_P$ can be isotoped away from the torus given by the boundary of the neighborhood for the original knot $K$. So necessarily the sphere would be contained in $\nu (K)$, and we can conclude by the primeness of $P$.
This can be done similarly to \cite[Thm. 4.2]{livingston1981homology}.

We sketch the construction here; call $S$ an embedded sphere giving a decomposition in two summands of $K_P$, and $R$ the annulus obtained by deleting a small neighborhood of the two points $K_P \cap S$. Generically, the intersections between $R$ and $\partial \nu (K)$ are composed by nullhomologous circles (in $R$), and circles which are parallel to $\partial R$. The first kind can be eliminated by isotopies, starting from the innermost ones.

We want to show  that there can not be any intersection which is parallel to $\partial R$; if such intersections existed, then by considering one close to $\partial R$, we would have found a disk cobounding a meridian of $\partial \nu (K)$ with intersection 1. But this is absurd, since the minimal number of intersections between $P$ and a disk cobounding a meridian $\{p\} \times S^1$ in the solid torus is 3.
\end{proof}

\section{Relation with the fundamental group}\label{sec:fund}

As independently pointed out by Eylem Zeliha Yildiz and Patrick Orson, there is a simple relation between almost-concordance and free homotopy classes of loops in a $3$-manifold $Y$:
\begin{prop}\label{prop:free}
If $K_0, K_1 \in \nodi$ are almost-concordant, then they are freely homotopic.
\end{prop}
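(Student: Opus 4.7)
The plan is to reduce the statement to two elementary facts about free homotopy classes of loops, namely that concordance and connect-summing with knots in $S^3$ both preserve the free homotopy class. Recall that free homotopy classes of loops in $Y$ correspond to conjugacy classes in $\pi_1(Y)$.

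First I would show that if $K_0 \sim K_1$ in $Y$, then $K_0$ and $K_1$ are freely homotopic. The concordance is a smoothly embedded annulus $A \cong S^1 \times [0,1]$ in $Y \times [0,1]$ with $\partial A = K_0 \sqcup \overline{K}_1$. Composing this parametrisation with the projection $Y \times [0,1] \to Y$ (which is a homotopy equivalence, with the obvious deformation retraction onto $Y \times \{0\}$) gives a continuous map $S^1 \times [0,1] \to Y$ providing the desired free homotopy between $K_0$ and $K_1$ (up to the irrelevant reversal of orientation, since conjugacy classes are preserved under inversion only up to passing to the inverse class; but the concordance as written is oriented so that both ends are consistently oriented once one accounts for the boundary orientation).

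Next I would show that for any knot $K \in \mathcal{K}(Y)$ and any $K' \in \mathcal{K}(S^3)$, the knot $K \# K'$ is freely homotopic to $K$ in $Y$. This is because the connected sum is performed inside a 3-ball $B \subset Y$: outside of $B$ the two knots agree, and inside $B$ the arc of $K \# K'$ and the arc of $K$ are two arcs with the same endpoints in the contractible region $B$, hence homotopic rel endpoints. Gluing this homotopy to the constant homotopy outside $B$ yields a free homotopy between $K \# K'$ and $K$.

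Combining the two steps finishes the argument: if $K_0 \dot{\sim} K_1$ there exist $K_0', K_1' \subset S^3$ with $K_0 \# K_0' \sim K_1 \# K_1'$, and then
\[
K_0 \simeq K_0 \# K_0' \simeq K_1 \# K_1' \simeq K_1
\]
freely in $Y$, where the outer equivalences use the connect-sum step and the middle one uses the concordance step. None of the steps is really an obstacle; the only point requiring mild care is making sure the orientations of the concordance annulus are tracked correctly so that the free homotopy is between $K_0$ and $K_1$ (rather than between $K_0$ and $\overline{K}_1$), but this is automatic from the standard convention on boundary orientations of a concordance.
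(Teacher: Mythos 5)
Your proof is correct and follows essentially the same strategy as the paper: project the concordance annulus to $Y$ to get a free homotopy between the two connected sums, and separately observe that connect-summing with a knot in $S^3$ does not change the free homotopy class. The only difference is cosmetic --- where you unknot the $S^3$-summand directly inside the ball using its simple connectivity, the paper realises the same homotopy via crossing changes in $S^3$ glued into the product cobordism; your version of that step is, if anything, a little more direct.
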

\begin{proof}
By hypothesis there exists a smoothly embedded annulus $A$ in \\$Y \times [0,1]$ connecting $K_0 \# K^\prime_0$ to $K_1 \#K^\prime_1$, for some knots $K^\prime_0 , K^\prime_1$ in $S^3$.

Consider two regular homotopies\footnote{The $h_i$ can be \emph{e.g.} a sequence of crossing changes taking each knot to $\bigcirc$.} $h_i$ for $i=0$ and $1$, between $K^\prime_i$ and the unknot in $S^3$; we can suppose that the $h_i$ are the identity outside a ball $D$ (times an interval) containing $K^\prime_i$, and that they fix a point of $K^\prime_i$. Removing a small neighborhood of the fixed point produces a relative homotopy from $K^\prime_i$ (minus a small interval) to the unknot (again, minus a small interval) supported in $\D^3 \times [0,1]$.
One can then attach these two resulting $4$-balls to the identity cobordisms $Y\times [0,1] \setminus D^\prime_i \times [0,1]$, where $D^\prime_i$ is a $3$-ball intersecting trivially $K_i$.

The result of these attachments are two homotopies, going from $K_0 \# K^\prime_0$ to $K_0$ and from $K_1 \# K^\prime_1$ to $K_1$. Attaching these to the two ends of $A$ provides the needed homotopy from $K_0$ to $K_1$.
\end{proof}

\begin{proof}[Proof of Thm. \ref{teo:qconcnonbanale}]
It is a well known fact that free homotopy classes of loops in $Y$ are in bijection with conjugacy classes of $\pi_1 (Y)$.

If the fundamental group of a closed $3$-manifold $Y$ is not abelian, there must be at least one commutator, say $[a,b] = aba^{-1} b^{-1}$, with $a, b \in \pi_1 (Y)$ which is not the identity. Taking an embedded and connected representative for $[a,b]$ yields a nullhomologous knot which is not almost-concordant to the unknot. Therefore\footnote{See for example \cite{hempel20043} or \cite{friedlintroduction} for a classification of $3$-manifolds with non-abelian fundamental group.} ``most" $3$-manifolds must have $\left|\conc_0 \right| >1$.
\end{proof}

This implies, \emph{e.g.}$\:$if $Y$ is a $\Z HS^3$, that any smooth and simple representative of a non-trivial element of $\pi_1 (Y)$ is a nullhomologous knot which is not freely homotopic, hence not almost-concordant to the unknot in $Y$.

In the following two sections we are going to prove that a similar result holds also in a special case of closed $3$-manifolds with abelian\footnote{Recall that the only closed and orientable $3$-manifolds with non-trivial abelian fundamental group are lens spaces, the $3$-torus and $S^1\times S^2$.} fundamental groups, that is the lens spaces $L(p,1)$.

\begin{figure}[h]
\includegraphics[width=10cm]{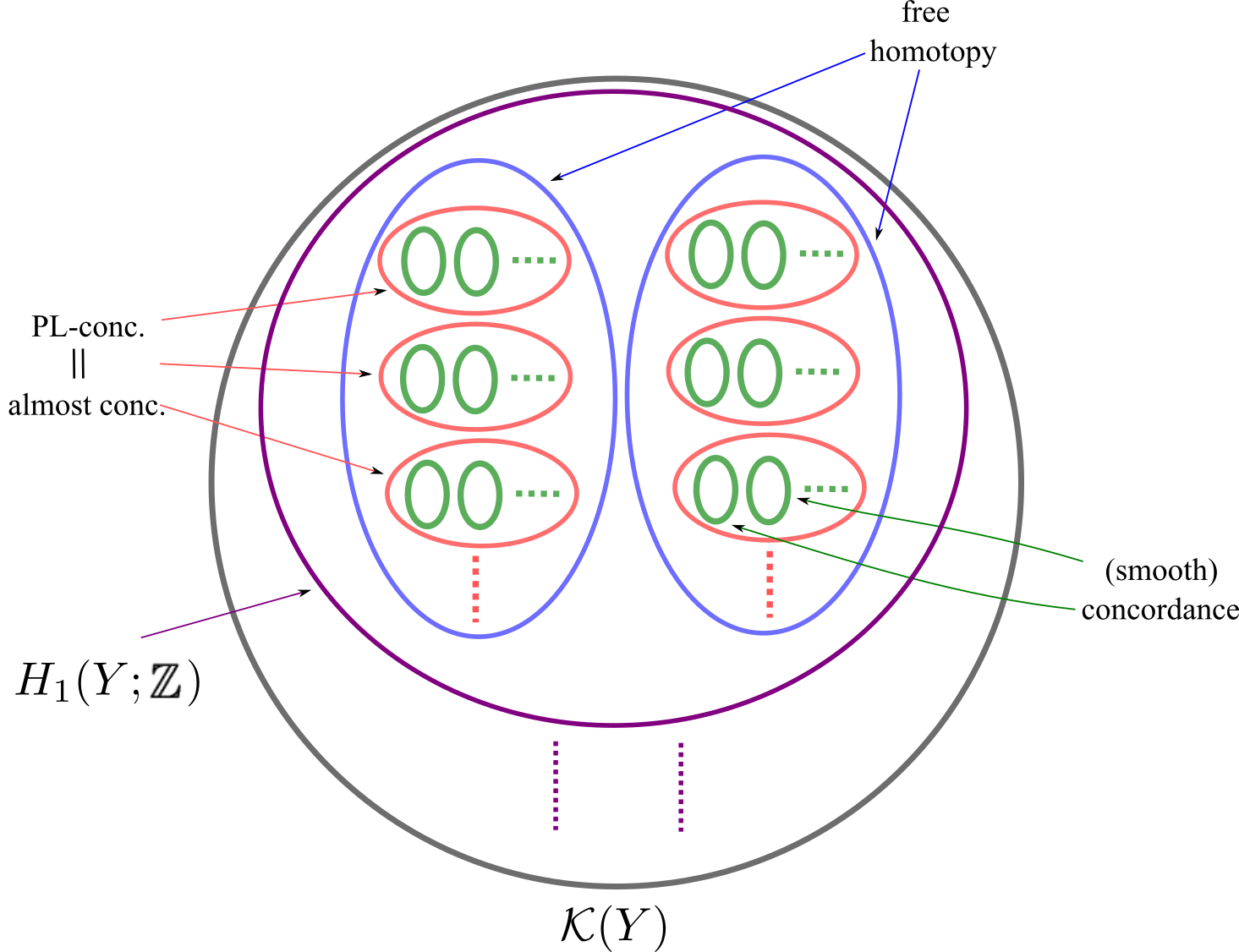}
\caption{The partition of the set of knots in a $3$-manifold into homology classes, free homotopy classes, almost-concordances and smooth concordance classes.}
\end{figure}

Proposition \ref{prop:free} implies that for each $3$-manifold $Y$ we have a splitting into free homotopy classes of loops in $Y$:
\begin{equation}
\widetilde{\mathcal{C}}^Y = \bigoplus_{l \in \faktor{\pi_1(Y)}{\mbox{\tiny{conj.}}}} \widetilde{\mathcal{C}}^Y_l
\end{equation}

\begin{rmk}
Note that the inclusions between these various classes (homology, free homotopy, PL-concordance and regular concordance) are strict, meaning that in general there are knots which are distinct in one class, but not in the larger ones.

As an example of the only not well known case, we are going to show in the next section the existence of  knots (in fact an infinite family) which are not $PL$-concordant to the unknot, but nonetheless belong to the same free homotopy\footnote{This is an easy consequence of the commutativity of the fundamental group of lens spaces.} class of $\bigcirc$. Thus the converse of Proposition \ref{prop:free} does not hold.
\end{rmk}

\section{A ``concrete" example}\label{sec:concrete}

In this section we are going to exploit Ozsv{\'a}th-Szab{\'o}-Rasmussen's Floer theoretic invariant $\tau$ to distinguish almost-concordance  classes, in the case where $Y = \lp$. We remark here that the following constructions holds in a more general context (Definition \ref{taushiftgener}), but it is generally hard to compute the invariants we will use.

On the other hand, the combinatorial versions of knot Floer homology, first developed by Manolescu, Ozsv{\'a}th and Sarkar \cite{manolescu2009combinatorial} for links in $S^3$, and by Baker, Hedden and Grigsby \cite{BGH} for links in $\lp$ are easily computable, and the latter can be used to provide many interesting examples.

We adopt the conventions of \cite{gompf4} and \cite{BGH} regarding lens spaces, according to which $\lp = S^3_{-\frac{p}{q}} (\bigcirc)$.

What follows is a brief recap on the needed results on knot Floer homology. Great sources on the subject are  \cite{holdisknot} and  \cite{manolescu2014introduction}.\\

Heegaard Floer homology is a package of invariants of $\spinc$ $3$-man-\\ifolds, introduced by {Ozsv{\'a}th and Szab{\'o} in \cite{holdisk3man}. The simplest of these invariants is denoted by $\widehat{HF} (Y,\s)$, where $\s \in \spinc (Y)$.

Soon after its definition, it was realized in \cite{holdisknot} and \cite{rasmussenknot} that a nullhomologous\footnote{The same holds for rationally nullhomologous knots, and similar results hold also in the general case, see \cite{2010arXiv1012.3088S}.} knot $(Y,K)$ induces a filtration on the complexes $\widehat{CF} (Y,\s)$ computing the Heegaard Floer homology group of $(Y,\s)$.\\
The filtered chain homotopy type of these filtered complexes, denoted by $\widehat{CFK} (Y,K,\s)$, is an invariant of the triple $(Y,K,\s)$. In particular to each such triple $(Y,K,\s)$, with $\s \in \spinc (Y)$ and $K \in \nodi$, we associate a relatively bigraded group $\widehat{HFK}(Y,K,\s)$, finitely generated over $\F = \Z /2 \Z$. This is just the homology of the graded object associated to the filtered chain homotopy type of $\widehat{CFK}(Y,K,\s)$.

The two gradings are known as the Maslov and Alexander degrees. The first one can be thought of as an homological degree (it decreases by 1 under the action of the differential), while the latter is the degree associated to the filtration induced on $\widehat{CF}(Y,\s)$ by $K$.

If $Y$ is a lens space (or more generally a rational homology $3$-sphere), the gradings lift to an absolute $\Q$-valued bigrading by results of \cite{absolutely}.

An $L$-space $\overline{Y}$ is a rational homology $3$-sphere such that for each $\s \in \spinc (\overline{Y})$:
$$\mbox{rk}_\F \left(\widehat{HF}(\overline{Y},\s) \right) = 1.$$

All lens spaces are $L$-spaces, and in the following we will fix an identification of $\spinc (\lp)$  with $\Z / p \Z$, as described in  \cite{absolutely}.\\

Knot Floer homology is  known (see \emph{e.g.} \cite{holdisknot}) to satisfy a formula\footnote{We do not specify here the various conventions involved for $\spinc$ structures, since in what follows we will only deal with the case $Y_0 = S^3$.} for the  connected  sum of two knots; if  $ (Y,K,\s) = (Y_0,K_0,\s_0)\#(Y_1,K_1,\s_1)$, then
\begin{equation}\label{eqn:connectedsum}
\widehat{HFK} \left( Y,K,\s \right) \cong \widehat{HFK} \left( Y_0 , K_0 ,\s_0 \right) \otimes \widehat{HFK} \left(Y_1, K_1 ,\s_1 \right).
\end{equation} %Equation (7) could be confusing.  I think you're referring to absolute SpinC structures in the summation; however, there is a unique one so in all cases the sum is over one element.????
The isomorphism on the complex level is a filtered chain homotopy equivalence.

The knot Floer homology of the unknot in a lens space is readily seen to be
\begin{equation}\label{eqbanale}
\widehat{HFK} (\lp, \bigcirc ,\s) \cong \F_{[d(p,q,\s),0]}
\end{equation}
where the subscript of the module indicates the bidegree (Maslov, Alexander) of the generator, and $d(p,q,\s)$ is a rational number known as the correction term\footnote{See \cite{absolutely} for the definition and a recursive formula.} for $L(p,q)$ in the $\spinc$ structure $\s$.
\begin{rmk}\label{rmk:nongenuino}
If a knot $K$ in an $L$-space $\overline{Y}$ is such that
$$\mbox{rk}_\F \left( \widehat{HFK} (\overline{Y},K,\s)\right) =1 \;\; \mbox{ and } \;\; \mbox{rk}_\F \left( \widehat{HFK} (\overline{Y},K,\s^\prime)\right) \neq 1$$ for some $\s,\s^\prime \in \spinc (Y)$, then it is l-prime by Equations \eqref{eqn:connectedsum} and \eqref{eqbanale}, coupled with the unknot detection of $\widehat{HFK}$ in $S^3$ (first proved in \cite{ozsvath2004genus}).
\end{rmk}

The main tool we are going to use in order to study the notions defined in the previous section will be a modified version of the $\tau$-invariant. This invariant was first defined for knots in the $3$-sphere in the holomorphic setting in \cite{ozsvath2003knot}, and it has proven to be extremely useful since. It is a concordance invariant\footnote{In fact it is an homomorphism $\tau :\mathcal{C} \rightarrow \Z$, see also Theorem \ref{thm:tauadd}.} of knots in $S^3$, and provides a lower bound on the slice genus. Its properties can be exploited \emph{e.g.} to give a self-contained combinatorial proof of the Milnor conjecture and to exhibit exotic $\mathbb{R}^4$s (see \cite[Ch. 8]{SOS}).

\begin{defi}\label{def:tau}
For $\s \in \spinc(\lp) $, $a \in \Q$ and $K \in \mathcal{K}(\lp)$, denote by $\mathcal{F}_{a}(\lp,K,\s)$ the elements in the complex $\widehat{CFK}(\lp,K,\s)$ with Alexander degree $\le a$.
There is a natural inclusion map:
\begin{equation*}
\iota^a : \mathcal{F}_{a}(\lp,K,\s)  \hookrightarrow \widehat{CF}(\lp,\s).
\end{equation*}
The $\tau$-invariant associated to the $\spinc$ structure $\s \in Spin^c(\lp)$ for a knot $(\lp, K)$, denoted by $\tau^\s (K)$, is the minimal $a \in \Q$ such that the induced map  in homology
\begin{equation*}
\iota^a_* : H_* \left( \mathcal{F}_{a}(\lp,K,\s) \right) \hookrightarrow \widehat{HF}(\lp,\s)
\end{equation*}
is non-trivial.
Furthermore define $$\tau (K) = \left( \tau^0 (K), \ldots , \tau^{p-1} (K) \right) \in \Q^{p}.$$
\end{defi}

\begin{rmk}\label{tauraz}
Despite being $\Q$-valued, the Alexander degrees of the elements in $\widehat{CFK} (\lp,K,\s)$ differ by integers. So, for each $\spinc$ structure $\s \in \spinc (\lp)$, these degrees are in fact a subset of $\{r_\s (K) + \Z\}$ for some $r_\s (K)\in \Q$.
In each lens space, there is a canonical choice for $r_{\s}(K)$, which depends\footnote{In each homology class of a lens space there is a unique Floer simple knot, called \emph{simple knot} (see \emph{e.g.} \cite{rasmussen2007lens}); $r_{\s}(K)$ is then just the Alexander degree of the only element of its knot Floer homology.} on the homology class of $K$, the $\spinc$ structure $\s$ and the parameters $p,q$.
Moreover, if the knot is nullhomologous, the degrees are all integers, and we can choose $r_\s = 0 \; \forall \s \in \spinc (\lp)$.

In the interest of  simplicity, in what follows we are going to regard each component of the $\tau$-invariant of knots in lens spaces as being $\Z$-valued.
\end{rmk}

The following result was proved for knots in $S^3$ in \cite{ozsvath2003knot}, building on  \cite[Theorem 7.1]{holdisknot} (which instead works for general $3$-manifolds). It was then proved in full generality by Hedden in \cite[Prop. 3.6]{hedden2008ozsvath}.
We reformulate it here as follows:
\begin{thm}[\cite{hedden2008ozsvath}]\label{thm:tauadd}
Suppose $(\lp,K) = (\lp, K_0)\# (S^3,K_1)$.  Then for each $\s \in \spinc (\lp)$:
$$\tau^\s (K) = \tau^\s (K_0) + \tau(K_1)$$
\end{thm}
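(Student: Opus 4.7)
The strategy is to promote the K\"unneth formula \eqref{eqn:connectedsum} for $\widehat{HFK}$ to a statement at the filtered chain level, and then to extract the behaviour of $\tau^{\s}$ from the resulting tensor product of filtered complexes. Since $S^3$ admits a unique $\spinc$ structure $\s_0$, the $\spinc$ structure on $\lp\#S^3\cong \lp$ is simply $\s$ pulled back from the $\lp$ factor, and \eqref{eqn:connectedsum} upgrades to a filtered chain homotopy equivalence
\[
\widehat{CFK}(\lp,K,\s)\;\simeq\;\widehat{CFK}(\lp,K_0,\s)\otimes \widehat{CFK}(S^3,K_1),
\]
in which the Alexander filtration on the right-hand side is the sum filtration $\mathcal{F}_{a}=\sum_{b+c\leq a}\mathcal{F}_{b}\otimes \mathcal{F}_{c}$. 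Collapsing these filtrations yields $\widehat{CF}(\lp,\s)\otimes\widehat{CF}(S^3)$, so at the level of homology one has $\widehat{HF}(\lp,\s)\otimes\widehat{HF}(S^3)\cong\widehat{HF}(\lp,\s)$, with the $S^3$ factor a single copy of $\F$.

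For the upper bound $\tau^{\s}(K)\leq\tau^{\s}(K_0)+\tau(K_1)$, I would fix cycles $\xi_0\in \mathcal{F}_{\tau^{\s}(K_0)}\widehat{CFK}(\lp,K_0,\s)$ and $\xi_1\in \mathcal{F}_{\tau(K_1)}\widehat{CFK}(S^3,K_1)$ whose classes generate $\widehat{HF}(\lp,\s)$ and $\widehat{HF}(S^3)$ respectively (such cycles exist by Definition \ref{def:tau}). Their tensor product $\xi_0\otimes \xi_1$ then lies in $\mathcal{F}_{\tau^{\s}(K_0)+\tau(K_1)}$ of the connected sum complex, and its image under $\iota^{a}_{\ast}$ is the tensor of two nonzero classes, hence nonzero in $\widehat{HF}(\lp,\s)$.

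For the lower bound $\tau^{\s}(K)\geq\tau^{\s}(K_0)+\tau(K_1)$, pick $a<\tau^{\s}(K_0)+\tau(K_1)$ and let $z\in\mathcal{F}_a$ be any cycle. Write $z=\sum_i x_0^i\otimes x_1^i$ with $x_0^i\in\mathcal{F}_{b_i}$, $x_1^i\in\mathcal{F}_{c_i}$ and $b_i+c_i\leq a$. Via K\"unneth, the class $[z]\in\widehat{HF}(\lp,\s)\otimes\widehat{HF}(S^3)$ is a linear combination of pure tensors whose factors admit representatives of filtration at most $b_i$ and $c_i$ respectively. In each such term either $b_i<\tau^{\s}(K_0)$ or $c_i<\tau(K_1)$, so at least one factor is a boundary in its own $\widehat{CF}$ by definition of $\tau$, and that pure tensor vanishes. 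The $L$-space hypothesis ensures $\widehat{HF}(\lp,\s)\otimes\widehat{HF}(S^3)\cong\F$, leaving no room for extension ambiguities, so $[z]=0$ and $\iota^{a}_{\ast}$ is identically zero.

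The main technical point is the lower bound: one must control the filtration induced on $H_{\ast}$ of a tensor product of filtered chain complexes, since a priori a cycle $z$ may bound even when no individual Alexander-graded summand $x_0^i \otimes x_1^i$ does. In the $L$-space setting this is transparent because $\widehat{HF}\otimes\widehat{HF}$ is one-dimensional and its filtration is concentrated exactly at $\tau^{\s}(K_0)+\tau(K_1)$. The general case, handled by Hedden in \cite{hedden2008ozsvath}, follows the same blueprint but requires a finer chain-level analysis exploiting a splitting of the Alexander filtration on the associated graded to rule out non-trivial extensions across filtration levels.
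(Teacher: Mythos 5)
First, a point of reference: the paper does not prove Theorem \ref{thm:tauadd} at all --- it is quoted from \cite[Prop.~3.6]{hedden2008ozsvath} (with the $S^3$ case going back to \cite{ozsvath2003knot}), so your proposal can only be compared with the standard argument in those sources, which, like yours, runs through the filtered K\"unneth equivalence. Your first half is correct and is exactly the standard step: taking cycles $\xi_0\in\mathcal{F}_{\tau^{\s}(K_0)}$ and $\xi_1\in\mathcal{F}_{\tau(K_1)}$ whose classes generate $\widehat{HF}(\lp,\s)$ and $\widehat{HF}(S^3)$, the pure tensor $\xi_0\otimes\xi_1$ lies in filtration level $\tau^{\s}(K_0)+\tau(K_1)$ and, over the field $\F$, represents the nonzero class $[\xi_0]\otimes[\xi_1]$; this gives $\tau^{\s}(K)\le\tau^{\s}(K_0)+\tau(K_1)$.

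The lower bound as written has a genuine gap. Decomposing a cycle $z\in\mathcal{F}_a$ as $\sum_i x_0^i\otimes x_1^i$ according to Alexander bidegree does not allow a term-by-term argument: the individual summands $x_0^i\otimes x_1^i$ are not cycles (cancellation in $\partial z=0$ happens across different bidegrees), so the phrase ``at least one factor is a boundary'' does not apply to them, and there is no a priori reason that $z$ is homologous, within $\mathcal{F}_a$, to a sum of pure tensors of cycles with the same filtration bookkeeping. Your closing appeal to the $L$-space condition --- that the filtration on $\widehat{HF}(\lp,\s)\otimes\widehat{HF}(S^3)$ ``is concentrated exactly at $\tau^{\s}(K_0)+\tau(K_1)$'' --- is precisely the statement under proof, so it is circular. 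Two standard repairs exist. (i) The mirror trick: apply your valid upper bound to $(\lp,K_0\# K_1)\#(S^3,\overline{K_1})$, where $\overline{K_1}$ is the reverse mirror, together with $\tau(\overline{K_1})=-\tau(K_1)$ and the concordance invariance of $\tau^{\s}$ (the corollary to Theorem \ref{thm:cobo}), using that $K_1\#\overline{K_1}$ is slice in $S^3$; this yields $\tau^{\s}(K_0)\le\tau^{\s}(K_0\# K_1)-\tau(K_1)$. (ii) A filtered-basis argument: over $\F$ one may replace $\widehat{CFK}(S^3,K_1)$ by a filtered chain homotopy equivalent model consisting of a single generator in Alexander degree $\tau(K_1)$ carrying the homology plus two-step acyclic summands (cf. \cite{krcatovich2015reduced}); tensoring with $\widehat{CFK}(\lp,K_0,\s)$ then exhibits the connected-sum complex, up to an acyclic filtered summand, as $\widehat{CFK}(\lp,K_0,\s)$ with its filtration shifted by $\tau(K_1)$, giving both inequalities simultaneously. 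Either route closes the gap; as it stands, the second half of your argument does not constitute a proof.
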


In other words, the $\mathcal{C}$-action shifts the $\tau$-invariants of $(\lp,K^\prime)$ in a uniform manner in each $\spinc$ structure.\\

The next theorem is a generalization of a well known result for knots in the $3$-sphere, first proven for knots in $S^3$ by Sarkar  \cite{sarkar2010grid} in a purely combinatorial setting. In the same paper it is used to give an elementary proof of the Milnor Conjecture, first proven by Kronheimer and Mrowka \cite{kronheimer1993gauge} using gauge-theoretic techniques.
The result holds with small modifications for arbitrary $3$-manifolds (see \cite{heddenunpublished}), but in what follows we will only need a version for lens spaces.
\begin{thm}[\cite{heddenunpublished}]\label{thm:cobo}
Let $\Sigma$ be a smooth cobordism of genus $g(\Sigma)$ in $\lp \times [0,1]$ between the knots $K_0 , K_1 \in \mathcal{K} (\lp)$.\\
Then $\forall \s \in \spinc (\lp)$:
$$ |\tau^\s (K_0) - \tau^\s (K_1)| \le g(\Sigma).$$
\end{thm}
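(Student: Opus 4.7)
The plan is to adapt the $S^3$ argument of Sarkar \cite{sarkar2010grid} to lens spaces, using either the grid-diagram toolkit of \cite{BGH} or the holomorphic surface-cobordism maps of \cite{holdisknot} to track how $\tau^{\s}$ changes through elementary pieces of $\Sigma$. After perturbing the projection $\lp \times [0,1] \to [0,1]$ to restrict to a Morse function on $\Sigma$, decompose $\Sigma$ as a concatenation of births (index $0$), saddles (index $1$), and deaths (index $2$). Writing $b_i$ for the number of $i$-handles, the identity $\chi(\Sigma) = -2g(\Sigma)$ yields $b_1 = 2g(\Sigma) + b_0 + b_2$.

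I would first dispatch the easy pieces. A $0$-handle attaches a split local unknot to the current link, so on the knot Floer side it tensors $\widehat{CFK}(\lp, L, \s)$ with $\widehat{HFK}(S^3, \bigcirc) \cong \F_{[0,0]}$ by the connected-sum formula \eqref{eqn:connectedsum}; Theorem \ref{thm:tauadd} then implies $\tau^{\s}$ is unchanged, and the same applies symmetrically to $2$-handles. The central technical step is the saddle bound: if $L$ and $L'$ differ by a single band move, then $|\tau^{\s}(L) - \tau^{\s}(L')| \le 1$. I would prove this by producing an explicit filtered chain map $\widehat{CFK}(\lp, L, \s) \to \widehat{CFK}(\lp, L', \s)$ which realises the $1$-handle cobordism and shifts Alexander filtration by at most one; in $S^3$ this is a specific rectangle/pentagon count on grid diagrams, and the lens-space analogue is carried out with the twisted toroidal grids of \cite{BGH}.

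Composing these estimates over all elementary cobordisms gives the naive bound $|\tau^{\s}(K_0) - \tau^{\s}(K_1)| \le b_1$. To sharpen it to $g(\Sigma)$, I would pair each birth with an accompanying death and verify that each birth-saddle-$\cdots$-saddle-death block contributes $0$ to the total shift, since its effect on $\widehat{CFK}$ amounts to tensoring and later untensoring by $\F_{[0,0]}$. This leaves exactly $2g(\Sigma)$ essential saddles organised into $g(\Sigma)$ genus-increment units (each a knot splitting into a $2$-component link and recombining into a knot), each of which contributes at most $1$ to the shift.

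The main obstacle will be the chain-level saddle lemma itself: adapting the rectangle counts of \cite{sarkar2010grid} to the toroidal BGH model, and verifying that the induced band map shifts the Alexander filtration by at most one in every $\spinc$ structure $\s \in \spinc(\lp)$. The Morse-theoretic bookkeeping, the invariance under $0$- and $2$-handles, and the final pairing argument are essentially formal once this is in hand.
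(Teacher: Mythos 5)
First, a point of comparison: the paper does not actually prove Theorem \ref{thm:cobo}. It is imported wholesale from Hedden's unpublished work \cite{heddenunpublished}, and the remark immediately following it only asserts that a combinatorial proof, adapting \cite{SOS} and \cite{BGH}, is carried out in \cite{PHDtesi}. So your proposal is not an alternative to an argument given in the paper; it is an attempt to supply the argument the paper delegates elsewhere, and the strategy you choose (Morse-decompose $\Sigma$ and control $\tau^\s$ through births, saddles and deaths on twisted toroidal grids) is exactly the one the paper's remark gestures at. You also correctly identify the crux: the filtered chain map realising a band move on a BGH grid, and the verification that it shifts the Alexander filtration by at most one in each $\s \in \spinc(\lp)$.

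That said, two steps of your bookkeeping are not right as written. A birth does not act by the connected-sum formula \eqref{eqn:connectedsum}: adding a split unknotted component is a disjoint union, not a connected sum, and on the chain level it tensors the complex with a two-dimensional vector space $\F_{(0,0)} \oplus \F_{(-1,0)}$ rather than with $\F_{[0,0]}$. To make sense of the intermediate stages of the cobordism at all, you must define $\tau^\s$ for links in $\lp$ and fix a normalization depending on the number of components. Relatedly, the "pair each birth with a death and observe that the block contributes $0$" step does not survive scrutiny: once a saddle merges the newborn component into the rest of the link, the complex is no longer a tensor product, so there is nothing to "untensor." The standard repair (Sarkar, and \cite[Ch.~8]{SOS}) is to prove asymmetric inequalities for merge saddles and split saddles relative to the component-normalized $\tau$, so that summing over all $b_1 = 2g(\Sigma) + b_0 + b_2$ saddles together with the births and deaths telescopes to $g(\Sigma)$ rather than to $b_1$. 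With that normalization in place, and with the saddle lemma established on twisted grids, your outline becomes the proof the paper points to in \cite{PHDtesi}.
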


\begin{cor}
Suppose $(\lp, K_0) \sim (\lp , K_1)$. Then for all $\s \in \spinc (\lp)$ $$\tau^\s (K_0) = \tau^\s (K_1),$$ that is the $p$-tuple of $\tau$-invariants is a concordance invariant.
\end{cor}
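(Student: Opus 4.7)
The plan is to apply Theorem \ref{thm:cobo} directly, observing that a concordance is just a genus-zero cobordism. First I would recall that by definition $(\lp, K_0) \sim (\lp, K_1)$ means there is a smoothly properly embedded annulus $A \cong S^1 \times [0,1] \hookrightarrow \lp \times [0,1]$ with $\partial A = K_0 \sqcup \overline{K}_1$. Since an annulus has genus $g(A) = 0$, this $A$ is a cobordism satisfying the hypotheses of Theorem \ref{thm:cobo}.

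Next I would invoke Theorem \ref{thm:cobo} with $\Sigma = A$. For every $\s \in \spinc(\lp)$ this yields
\begin{equation*}
|\tau^\s(K_0) - \tau^\s(K_1)| \le g(A) = 0,
\end{equation*}
which forces $\tau^\s(K_0) = \tau^\s(K_1)$. Ranging over all $p$ spin$^c$ structures gives the equality of the full $p$-tuple $\tau(K_0) = \tau(K_1) \in \Z^p$ (with the integer convention discussed in Remark \ref{tauraz}), establishing that $\tau$ is a concordance invariant as claimed.

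There is essentially no obstacle: the corollary is an immediate specialization of the genus bound in Theorem \ref{thm:cobo} to the case of a cobordism of genus zero. The only small subtlety is bookkeeping, namely verifying that the concordance annulus is indeed an admissible cobordism in the sense required by Theorem \ref{thm:cobo} (smooth and properly embedded in $\lp \times [0,1]$ with the correct boundary), which is already built into Definition of concordance given in Section \ref{sec:defi}.
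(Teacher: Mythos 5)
Your proof is correct and is essentially identical to the paper's: both apply Theorem \ref{thm:cobo} to the concordance annulus, which has genus zero, to force $|\tau^\s(K_0)-\tau^\s(K_1)|\le 0$ for every $\s$. No further comment is needed.
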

\begin{proof}
By hypothesis there is a genus-0 surface $\Sigma$ connecting $K_0$ and $K_1$ in $\lp \times [0,1]$, so for all $\s \in \spinc (\lp)$:
$$0 \le |\tau^\s (K_0) - \tau^\s (K_1)| \le g(\Sigma) = 0.$$
\end{proof}

\begin{rmk}
By adapting the techniques of \cite{SOS} and \cite{BGH},  Theorem \ref{thm:cobo} can be proven in a combinatorial fashion also for knots in lens spaces (see \cite{PHDtesi}).
\end{rmk}

We can now turn to the study of the almost-concordance classes of knots in $\lp$.
The key fact that will allow us to distinguish them is Theorem \ref{thm:tauadd}:
\begin{defi}\label{def:taushifted}
Let $(\lp , K)$ be a knot; define the \emph{shifted $\tau$-invariant} as the $p$-tuple
$$\tau_{sh} (K) = (\tau^1(K) + n , \ldots , \tau^p(K) + n)$$
where $n \in  \Z$ is the only integer\footnote{\emph{cf.} Remark \ref{tauraz}.} such that $\displaystyle \min_{\s} \{\tau^\s (K) + n\} = 0$.
\end{defi}

We can now turn to the proof of Proposition \ref{proptaushinv}.
\begin{proof}[Proof of Prop. \ref{proptaushinv}]
Using Theorem \ref{thm:tauadd}, it is immediate to show that
the $\tau_{sh}$-invariant is unchanged under the action described in Equation \eqref{azione}, hence almost-concordant knots have the same $\tau_{sh}$-invariant.
\end{proof}

\begin{prop}\label{prop:localeshif}
If $K$ is a local knot in $\lp$, then $\tau_{sh} (K) = (0,\ldots,0)$.
\end{prop}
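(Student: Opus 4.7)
The plan is to unwind the definition of local knot and apply the connected sum formula for $\tau$ given in Theorem \ref{thm:tauadd}. By the alternative characterisation mentioned in Section \ref{sec:defi}, a local knot $K \subset L(p,q)$ admits a decomposition
$$(L(p,q), K) = (L(p,q), \bigcirc) \# (S^3, K')$$
for some knot $K' \subset S^3$. So the invariant of $K$ in each $\spinc$ structure reduces, via Theorem \ref{thm:tauadd}, to
$$\tau^\mathfrak{s}(K) = \tau^\mathfrak{s}(\bigcirc) + \tau(K').$$

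Next I would compute $\tau^\mathfrak{s}(\bigcirc)$ directly from Equation \eqref{eqbanale}. Since $\widehat{HFK}(L(p,q), \bigcirc, \mathfrak{s}) \cong \mathbb{F}_{[d(p,q,\mathfrak{s}),0]}$ is generated by a single class sitting in Alexander degree $0$, the associated filtered complex $\widehat{CFK}(L(p,q), \bigcirc, \mathfrak{s})$ is supported in filtration level $0$ and its unique homology generator maps non-trivially into $\widehat{HF}(L(p,q), \mathfrak{s})$. By Definition \ref{def:tau} this forces $\tau^\mathfrak{s}(\bigcirc) = 0$ for every $\mathfrak{s} \in \spinc(L(p,q))$.

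Combining the two steps gives $\tau^\mathfrak{s}(K) = \tau(K')$ for every $\mathfrak{s}$, i.e.\ all $p$ components of $\tau(K)$ coincide with the single integer $\tau(K')$. Applying Definition \ref{def:taushifted} with the shift $n = -\tau(K')$, which is forced since $\min_\mathfrak{s}\{\tau^\mathfrak{s}(K) + n\} = 0$, we obtain
$$\tau_{sh}(K) = (\tau(K') + n, \ldots, \tau(K') + n) = (0, \ldots, 0),$$
as desired.

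There is no real obstacle here: the entire argument is a mechanical composition of (i) the multiplicativity of $\widehat{CFK}$ under connected sum as packaged in Theorem \ref{thm:tauadd}, (ii) the explicit computation \eqref{eqbanale} for the unknot in a lens space, and (iii) the bookkeeping built into the definition of $\tau_{sh}$. The only subtlety worth flagging is the use of Remark \ref{tauraz} to regard the components of $\tau$ as integer-valued, so that the shift $n \in \Z$ appearing in Definition \ref{def:taushifted} indeed exists and gives the stated equality.
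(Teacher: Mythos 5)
Your proof is correct and follows essentially the same route as the paper: the paper's one-line argument likewise combines Theorem \ref{thm:tauadd} with the triviality of the unknot's invariant, which you have merely spelled out in more detail (including the computation $\tau^\s(\bigcirc)=0$ from Equation \eqref{eqbanale} and the bookkeeping of the shift $n$).
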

\begin{proof}
It follows immediately from Theorem \ref{thm:tauadd}, and the fact that the unknot has trivial $\tau_{sh}$-invariant.
\end{proof}

It is possible to generalise greatly the definition of the $\tau_{sh}$-invariant, by considering Hedden's approach to $\tau$ invariants \cite{hedden2008ozsvath}:

\begin{defi}\label{taushiftgener}
Given a knot $K \in \nodi$, one can choose an ordered tuple of elements $(x_1, \ldots, x_m) \in \left( \widehat{CF}(Y) \right)^m$ which are non-trivial and distinct in the homology $\widehat{HF}(Y)$. To each of these elements we can associate a numerical invariant $\tau_{[x_i]}(Y,K)$.
\end{defi}
\begin{rmk}
In the previous definition of $\tau_{sh}$, we chose $[x_i] = \widehat{HF}(L(p,q),\s_i)$, that is the only element of $\widehat{HF}(L(p,q))$ in the $i$-\emph{th} $\spinc$ structure.\\
The $\tau_{[x_i]}(Y,K)$ invariants have the same behaviour (\cite[Prop. 3.6]{hedden2008ozsvath}) under connected sum with knots in the three-sphere as the ``regular" $\tau$-invariants, hence can be used to obstruct the existance of almost-concordances as well.
\end{rmk}

In particular if $[x] \neq [y]$ are two non-trivial classes in $\widehat{HF}(Y)$ for a $\Q HS^3$ $Y$, and $K_0,K_1$ are two knots in $\nodi$ such that
\begin{equation}
\left(\tau_{[x]}(Y,K_0),\tau_{[y]}(Y,K_0)\right) \neq \left(\tau_{[x]}(Y,K_1) + r,\tau_{[y]}(Y,K_1)) + r\right),
\end{equation}
where $r\in \Q$ is the only rational such that $\tau_{[x]}(Y,K_0) = \tau_{[x]}(Y,K_1) + r$, then $K_0 $ is not almost-concordant to $ K_1$.

\begin{ex}\label{esempio:0134}
Adapting the grid-diagrammatic approach\footnote{See also \cite{manolescu2007combinatorial} and \cite{SOS}.} developed in \cite{BGH}, we were able to compute in \cite{PHDtesi} the $\tau$-invariants for the knot $(L(3,1),\widetilde{K})$ shown in the twisted grid diagram form\footnote{For the definitions of twisted grid diagrams for knots in lens spaces see \cite{BGH}.} in Figure \ref{nodo:0134}.

The following tables provide the sets of generators for the complexes $\widehat{GC}(L(3,1), \widetilde{K},\s)$, which are combinatorially defined and  quasi-isomorphic to the complexes $\widehat{CFK}(L(3,1), \widetilde{K},\s)$ by \cite[Thm.1.1]{BGH}.

These are finitely generated $\F[U]$-modules, where $U$ denotes a graded endomorphism decreasing the Maslov and Alexander degrees of a generator by 2 and 1 respectively. Note that the resulting homology is a finitely generated $\F$-module.

There are 6 generators (as an $\F [U]$-module) for each complex \\$\widehat{GC}(L(3,1),\widetilde{K},\s)$ computing the knot Floer homology of $(L(3,1), \widetilde{K})$ in the $\spinc$ structure $\s$. We denote them by $x_i^\s$ for $i = 0, \ldots,5$ and $\s = 0,1$, and display their filtered differentials below. We omit the computations for $\s = 2$, since in that case the complex is identical to the $\s = 1$ one.

The filtration on the complex is used to compute the $\tau$-invariants according to definition \ref{def:tau}. To obtain the homology of the associated graded $\widehat{HFK}(L(3,1),\widetilde{K},\s) \cong  \widehat{GH}(L(3,1),\widetilde{K},\s)$, just delete all the differentials which do not preserve the Alexander degree (these are the grey dashed lines in Figure \ref{fig:complessi}).

\begin{tabular}{ccc}\label{contazzi}
Generator & $\left(M, A \right)$ &  Differential\\
\hline
$\spinc$  degree = 0 & & \\
\hline
& &\\
$x_0^0$ & $\left(\frac{3}{2}, 1 \right)$   & $\partial (x_0^0) = x_1^0 + x_2^0$\\
$x_1^0$ & $\left(\frac{1}{2}, 0 \right)$   & $\partial (x_1^0) = Ux_0^0 + x_3^0 + x_4^0$\\
$x_2^0$ & $\left(\frac{1}{2}, 0 \right)$   & $\partial (x_2^0) =  Ux_0^0 + x_3^0 + x_4^0$\\
$x_3^0$ & $\left(-\frac{1}{2}, -1 \right)$ & $ \partial (x_3^0) = U (x_1^0 + x_2^0)$\\
$x_4^0$ & $\left(-\frac{1}{2}, -1 \right)$ & $\partial (x_4^0) = 0$\\
$x_5^0$ & $\left(-\frac{3}{2}, -2 \right)$ & $\partial (x_5^0) = U x_4^0$ \\
& &\\
\hline
$\spinc$  degree = 1 & & \\
\hline
& &\\
$x_0^1$ & $\left(\frac{7}{6}, 0 \right)$   & $ \partial (x_1^1) = x_1^1 + x_2^1$\\
$x_1^1$ & $\left(\frac{1}{6}, 0 \right)$   & $\partial (x_1^1) = x_4^1 + x_5^1 $\\
$x_2^1$ & $\left(\frac{1}{6}, 0 \right)$   & $\partial (x_2^1) = x_4^1 + x_5^1$\\
$x_3^1$ & $\left(\frac{1}{6}, -1 \right)$  & $\partial (x_3^1) = x_4^1 + x_5^1$\\
$x_4^1$ & $\left(-\frac{5}{6},  -1\right)$ & $\partial (x_4^1) = U (x_1^1 + x_3^1)$\\
$x_5^1$ & $\left(-\frac{5}{6}, -1 \right)$ &  $\partial (x_5^1) = U (x_1^1 + x_3^1) $   \\
& &\\
\end{tabular}\\

\begin{center}
\begin{figure}
\includegraphics[width = 9cm]{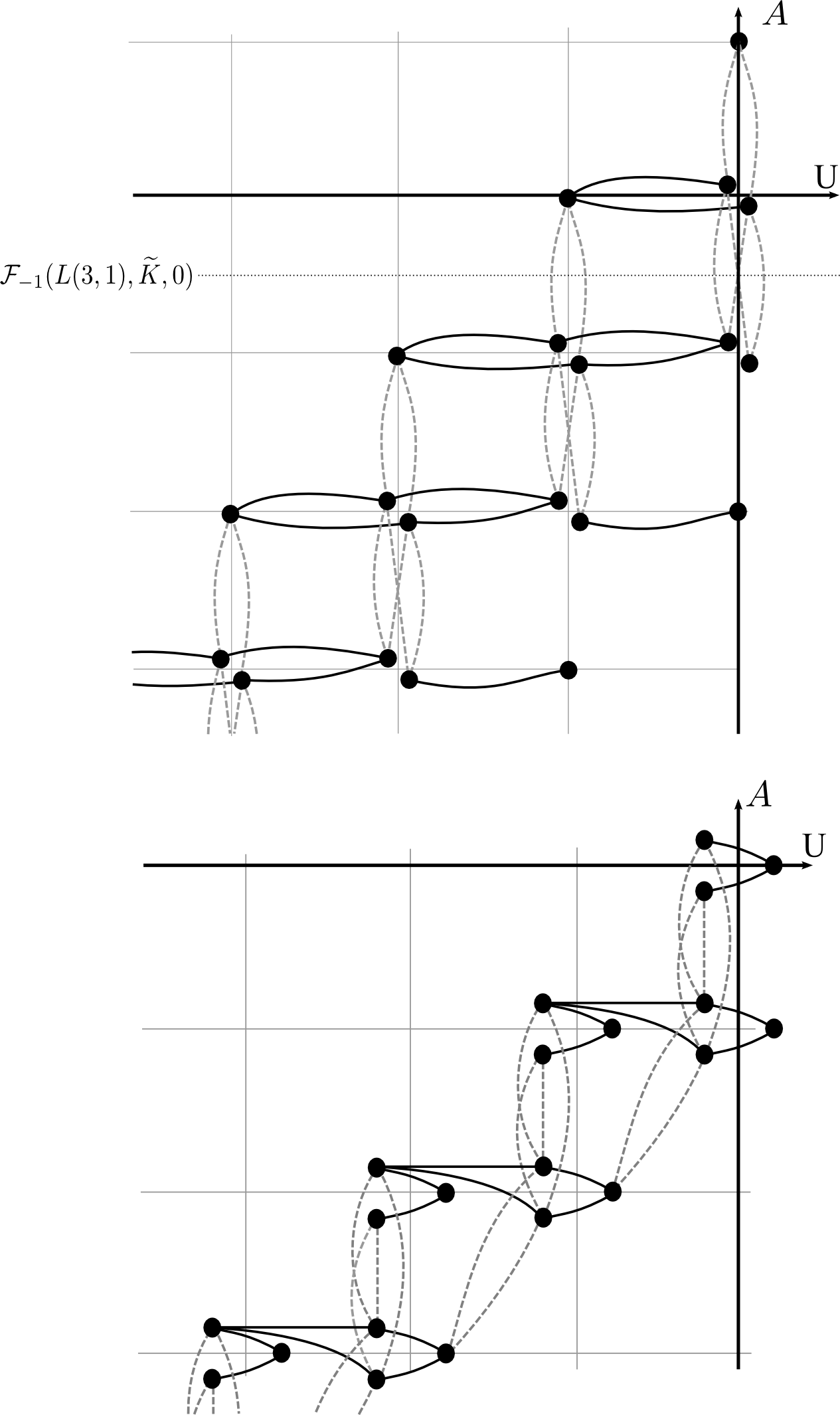}
\caption{The complexes $\widehat{GC}(L(3,1),\widetilde{K},\s)$ for $\s = 0$ (top) and $\s = 1$ (bottom). The axes are labeled by Alexander degree and powers of $U$. The dotted line highlights the filtration level where the map\newline \hspace{\linewidth} $\;\;\;\;\;\;\; \iota^a_* : H_* \left( \mathcal{F}_{a}(L(3,1),\widetilde{K},0) \right) \hookrightarrow \widehat{HF}(L(3,1),0)\;\;\;\;\;\;\;\;\;\;\; $ becomes surjective.}
\label{fig:complessi}
\end{figure}
\end{center}

We display the homology of the associated graded complex here:
\begin{equation}
\widehat{HFK} (L(3,1),\widetilde{K},\s) =
\begin{cases}
\F_{\left[ \frac{3}{2} , 1 \right]} \oplus\F_{\left[ \frac{1}{2} , 0\right]} \oplus \F_{\left[ -\frac{1}{2} , -1\right]}  & \mbox{ if } \s = 0\\
\F_{\left[ \frac{1}{6},0\right]}  & \mbox{ if } \s = 1\\
\F_{\left[ \frac{1}{6},0\right]}  & \mbox{ if } \s = 2\\
\end{cases}
\end{equation}
$\widetilde{K}$ is a nullhomologous knot, and  $$\tau (\widetilde{K}) = (-1,0,0) \;\;\;\; \Longrightarrow \;\;\;\; \tau_{sh}(\widetilde{K}) = (0,1,1).$$

In particular this means that $\widetilde{K}$ is not even almost-concordant to a local knot. Furthermore, since it has non-trivial knot Floer homology only
in the $\spinc$ structure 0, by Remark \ref{rmk:nongenuino} it can not be the connected sum with a knot in $S^3$, hence it is also l-prime.

\begin{rmk}
The computation of the $\tau$-invariants displayed in Figure \ref{fig:complessi} was partially aided by a grid homology calculator for knots in lens spaces. This program was developed in \sage, and a GUI version is freely  available on my homepage \url{http://poisson.phc.dm.unipi.it/~celoria/#programs}.
\end{rmk}

\begin{figure}[h]
\includegraphics[width=6cm]{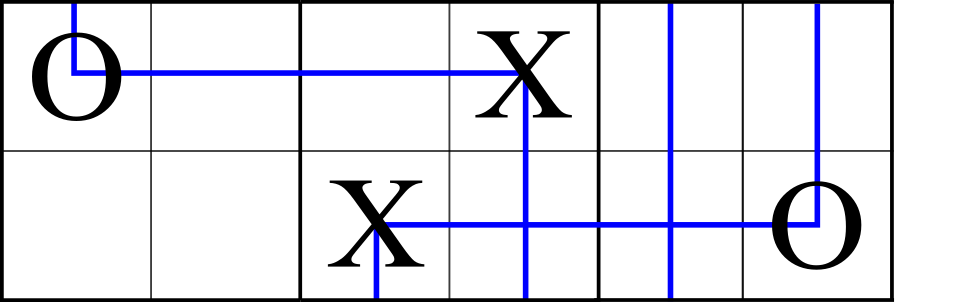}
\caption{A grid diagrammatic representation of the knot $\widetilde{K}$ in $L(3,1)$.}
\label{nodo:0134}
\end{figure}
\end{ex}

\section{Extension to $L(p,1)$}\label{sec:extension}

Now we are going to extend the result obtained from the previous computations to obtain a nullhomologous knot $\widetilde{K}_p \subset L(p,1) $ for each $p\ge 3$, such that $\widetilde{K}_p  $ is not almost-concordant to $\bigcirc$.

The knots $\widetilde{K}_p$ are constructed by \emph{expanding} (Definition \ref{def:expansion}) the grid diagram presentation for the knot $\widetilde{K} = \widetilde{K}_3$ of example \ref{esempio:0134}. Every such knot can be described by a five-tuple of integers $(p,X_0,X_1,O_0,O_1)$; \\$p$ is the parameter of $L(p,1)$, while the other four values describe the position of the $\mathbb{X}$ and $\mathbb{O}$ markings appearing in the grid.
In our case all the knots $K_p$ can be described by $\mathbb{X} = (2,3)$ and $\mathbb{O} = (5,0)$, so one can easily see that $K_3 = \widetilde{K}$ from example \ref{esempio:0134}.

\begin{rmk}\label{rmklevine}
As pointed out by Levine \cite{levineprivate}, a knot in a lens space whose homology in each $\spinc$ structure is isomorphic to either the trefoil or the unknot (and both cases occur) has non-trivial $\tau_{sh}$-invariant. This is due to the fact that if $\widehat{HFK}(\lp,K, \s)\cong \widehat{HFK}(S^3,3_1)$ (up to a shift in the Alexander grading), the spectral sequence (see \cite[Lemma 3.6]{holdisknot}) $$\widehat{HFK}(\lp,K, \s) \Longrightarrow \widehat{HF}(\lp,\s) \cong \F$$ has two consecutive terms canceling each other out, leaving a surviving generator in Alexander degree $\pm 1$.
\end{rmk}

\begin{defi}\label{def:expansion}
The operation of \emph{expansion} consists in adding an $n\times n$ box on the right of an $n$-dimensional grid representing a knot in $L(p,1)$, taking it to a new grid, with the same dimension, but representing a knot in $L(p+1,1)$ (see Figure \ref{nodo:kappap}).
\end{defi}

In what follows we are going to exhibit a (filtered) chain homotopy between the grid complex for $\widetilde{K}_p$ to $\widetilde{K}_{p+1}$ in the $\spinc$ structures $0$ and $1$. Using Remark \ref{rmklevine}, we are going to prove that each of these knots have non-trivial $\tau_{sh}$-invariants, hence they can not be almost-concordant to the unknot.

\begin{figure}[h]
\includegraphics[width=9cm]{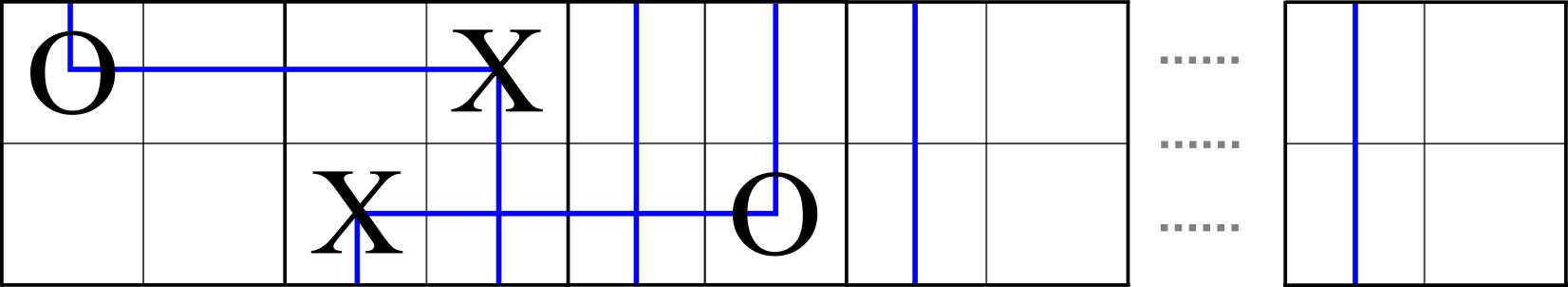}
\caption{A grid diagram representing the knots $K_p \in \mathcal{K}(L(p,1))$ (the grid is composed by $4p$ squares). }
\label{nodo:kappap}
\end{figure}

\begin{rmk}
This expansion procedure can be carried out in slightly greater generality, and is going to be fully detailed in an upcoming paper.
\end{rmk}

We find it more convenient to prove the following statements in terms of the \emph{tilde} version\footnote{Cf. \cite[Ch. 4]{SOS}.} of grid complex and homology, denoted by $\widetilde{GC}(L(p,1),K, \s)$ and $\widetilde{GH}(L(p,1),K, \s)$ respectively. \\In the case at hand (so for grid diagrams of dimension 2), we have an isomorphism
\begin{equation}
\widehat{HFK}(L(p,1),K,\s) \cong \widetilde{GH}(L(p,1),K, \s) \otimes \left(\F_{(0,0)} \oplus \F_{(-1,-1)} \right).
\end{equation}

The differential $\widetilde{\partial}$ of the tilde version of grid homology can be represented by empty\footnote{That is not containing any $\XX$ or $\OO$ marking or component of the generators.} embedded rectangles in the grid.

The computation of the differentials is a easy but quite tedious exercise, we report here the results. First denote by $G_p$ the grid diagram describing the knot $K_p$;
each generator of the complex $\widetilde{GC}(G_p)$ can be regarded as a pair $(\sigma,(a,b)) \in \mathfrak{S}_2 \times \{0,\ldots, p-1\}^2$.

In order to ease a bit the notation, we call $x_{a,b} = (Id,(a,b))$ and $y_{a,b} = ((12)),(a,b))$, and refer to $(a,b)$ as the \emph{$p$-coordinates} of the generator (note that in the following they will be regarded as integers $\text{mod}\;p$).

The assumption on the $\spinc$ structure being 0 implies (\cite[Sec. 2.2]{BGH}) that $ a+b \equiv 2 \; (mod \;p)$. We can thus divide the generators according to whether $0 \le a,b\le 2$, or $a,b \ge 3$.

If $\s = 0$ the differential is:
\begin{equation}\label{eqdiff1}
\widetilde{\partial}(x_{a,b}) =
\begin{cases}
0 & \mbox{ if } a,b \in \{0,1,2\}\\
y_{b,a} + y_{a-1,b+1} & \mbox{ if } 3 \le a\le b\\
y_{a,b} + y_{b-1,a+1} & \mbox{ if } 3 \le b \le a
\end{cases}
\end{equation}
\begin{equation}
\widetilde{\partial}(y_{a,b}) =
\begin{cases}
0 & \mbox{ if } a,b \in \{0,1,2\} \mbox{ or } a = p-1\\
x_{a,b} + x_{b,a} & \mbox{ if } 3 \le a < b\\
x_{a+1,b-1} + x_{b-1,a+1} & \mbox{ if } 3 \le b \le a < p-1
\end{cases}
\end{equation}

If instead $\s = 1$, the generators $(\sigma,(a,b)) \in \mathfrak{S}_2 \times  \{0,\ldots, p-1\}^2$ satisfy $a + b \equiv 3 \; (mod\; p)$.

If $p\ge 5$, one can divide the generators in two sets as before: those in which both indices $a,b$ are strictly smaller than $4$, and the other ones in which both $p$-coordinates are $\ge 4$. The case in which $p=4$ can be worked out by hand (or even better with a computer), and one can prove that there are (filtered) chain homotopies relating the complexes of $\widetilde{K}_3, \widetilde{K}_4$ and $\widetilde{K}_5$ for $\s=1$. \\The corresponding differentials for $p\ge 5$ are:
\begin{equation}
\widetilde{\partial}(x_{a,b}) =
\begin{cases}
0 & \mbox{ if } (a,b) = (0,3),(1,2),(2,1)\\
y_{2,1} + y_{0,3} & \mbox{ if } (a,b) = (3,0)\\
y_{b,a} + y_{a-1,b+1} & \mbox{ if } 4\le a\le b \le p-1\\
y_{a,b} + y_{b-1,a+1} & \mbox{ if } 4 \le b\le a\le p-1
\end{cases}
\end{equation}
\begin{equation}\label{eqdiff2}
\widetilde{\partial}(y_{a,b}) =
\begin{cases}
0 & \mbox{ if } (a,b) = (0,3),(2,1) \\
x_{0,3} & \mbox{ if } (a,b) = (3,0),(p-1,4)\\
x_{2,1} + x_{1,2} & \mbox{ if } (a,b) = (1,2)\\
x_{a,b} + x_{b,a} & \mbox{ if } 4 \le a < b\\
x_{a+1,b-1} + x_{b-1,a+1} & \mbox{ if } 4 < b \le a
\end{cases}
\end{equation}

In both cases we can apply the \emph{cancellation lemma} (see \emph{e.g.} \cite{krcatovich2015reduced}) to ``cancel" the two rightmost generators (see Figure \ref{evolutioncpx1}).\\
We obtain two complexes which are chain homotopic to both $\widetilde{GC}(G_p,\s)$ and $\widetilde{GC}(G_{p-1},\s)$.

\begin{figure}[h]
\includegraphics[width=11cm]{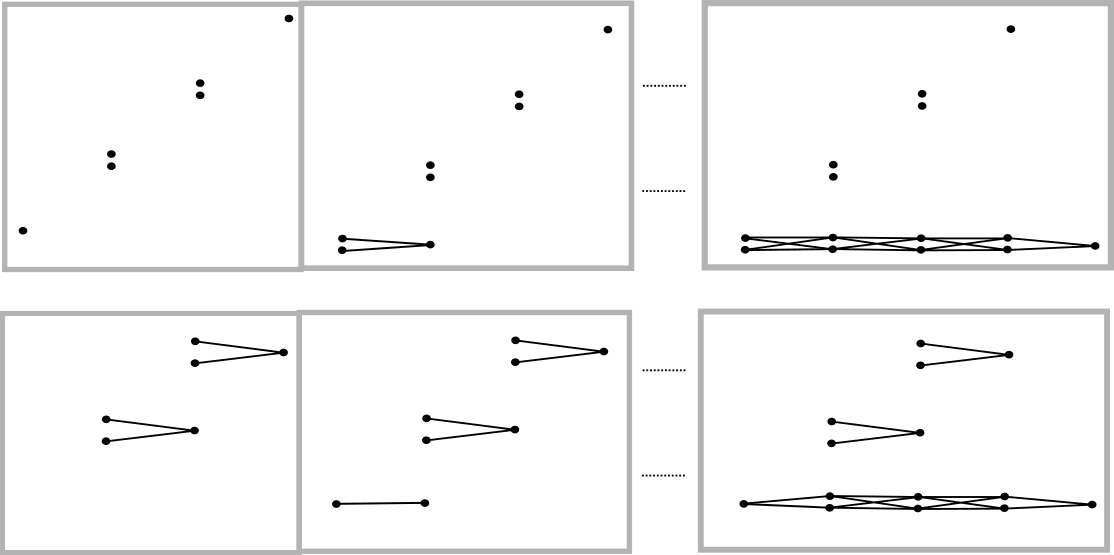}
\caption{The``evolution"of the complexes $\widetilde{GC}(L(p,1),K_p,\s)$ for $\s = 0$ (top part) and $\s = 1$ (on the lower part), and $p = 3,4$ and $7$ from left to right. In this case increasing the dimension of the grid produces chain homotopic complexes. }
\label{evolutioncpx}
\vspace{0.5cm}
\includegraphics[width=9cm]{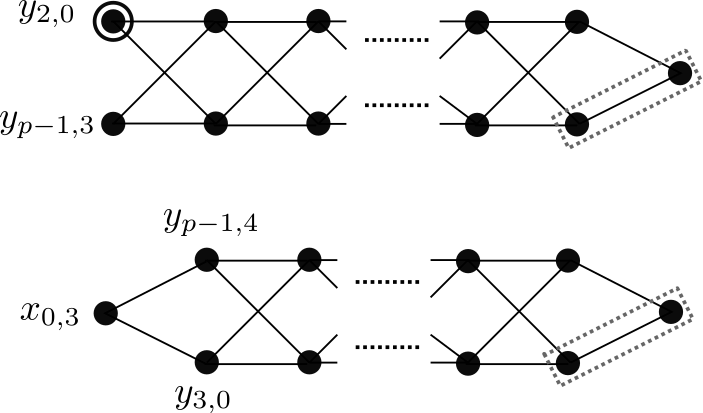}
\caption{A closer look to the lower parts in the last figure. The circled generator is the only one surviving in homology; cancelling the two generators enclosed by the dotted grey line produces a chain homotopy to the previous complex.}
\label{evolutioncpx1}
\end{figure}

Equivalently, the homology of the complexes in the two relevant $\spinc$ structures in minimal Alexander degree is always represented by the cycle $[y_{2,0}]$ if $\s = 0$ and is acyclic for $\s=1$ (see Figures \ref{evolutioncpx} and \ref{evolutioncpx1}).

In these last sections we have proved that $\widetilde{\mathcal{C}}^{L(p,1)}_0$ is non-trivial, by exhibiting the null-homologous knots $K_p \in \mathcal{K}(L(p,1))$, and showing that they are not almost-concordant to the unknot:
\begin{prop}\label{ellepiuno}
All the knots $\widetilde{K}_p \in \mathcal{K}(L(p,1))$ represent non-trivial almost-concordance classes in $\widetilde{\mathcal{C}}_0^{L(p,1)}$.
\end{prop}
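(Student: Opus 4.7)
The plan is to use the $\tau_{sh}$-invariant as an obstruction. By Proposition \ref{proptaushinv} it is an almost-concordance invariant, and by Proposition \ref{prop:localeshif} it vanishes on the unknot. Since each $\widetilde{K}_p$ is nullhomologous and so sits in the zero homology class, to prove the proposition it is enough to show that $\tau_{sh}(\widetilde{K}_p) \neq (0,\ldots,0)$ for every $p \geq 3$.

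I would argue by induction on $p$, with base case $p = 3$ handled by the explicit grid computation in Example \ref{esempio:0134}, which gives $\tau_{sh}(\widetilde{K}_3) = (0,1,1)$. For the inductive step the key technical point is to establish, in $\spinc$ structures $\s = 0$ and $\s = 1$, a \emph{filtered} chain homotopy equivalence
$$\widetilde{GC}(L(p,1), \widetilde{K}_p, \s) \;\simeq\; \widetilde{GC}(L(p-1,1), \widetilde{K}_{p-1}, \s),$$
by applying the cancellation lemma to the pair of rightmost generators in the expanded grid diagram (as sketched in Figure \ref{evolutioncpx1}). The explicit differentials in Equations \eqref{eqdiff1}--\eqref{eqdiff2} produce a cancellable pair of generators, and a direct check with the grid-based Maslov--Alexander bigrading shows that it sits in a single Alexander filtration level, so the cancellation is genuinely filtered.

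Iterating this reduction down to $p = 3$ preserves the filtered chain homotopy type in $\spinc$ structures $0$ and $1$, so the associated $\tau^\s$ values agree with those of $\widetilde{K}_3$. By Example \ref{esempio:0134} (or equivalently Remark \ref{rmklevine}, since the $\s=0$ homology has the shape of the trefoil's while the $\s=1$ homology is unknot-like), this forces $\tau^0(\widetilde{K}_p) \neq \tau^1(\widetilde{K}_p)$, hence $\tau_{sh}(\widetilde{K}_p) \neq (0,\ldots,0)$. The remaining $\spinc$ structures $\s = 2, \ldots, p-1$ do not need to be controlled for this conclusion, since the asymmetry between $\s=0$ and $\s=1$ is already enough to defeat any almost-concordance to the unknot.

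The hard part will be verifying that the cancellation genuinely respects the Alexander filtration: the cancellation lemma in its unfiltered form is standard, but one must check that the differential connecting the two cancelled generators is a filtration-preserving isomorphism and that no auxiliary differential between those two generators and the rest of the complex disturbs the filtered structure. This is bookkeeping with the grid bigradings, but for small values of $p$ (in particular $p=4$ in the $\s=1$ case, already flagged in the excerpt) the generic formulas break down and have to be treated by hand as the base of the induction.
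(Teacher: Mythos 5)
Your proposal follows essentially the same route as the paper: the paper also reduces $\widetilde{GC}(L(p,1),\widetilde{K}_p,\s)$ to the $p=3$ case via the cancellation lemma applied to the expanded grid differentials in $\spinc$ structures $0$ and $1$, and then concludes from $\tau^0(\widetilde{K}_p)=-1\neq 0=\tau^1(\widetilde{K}_p)$ that $\tau_{sh}$ is non-trivial, invoking its almost-concordance invariance. Your extra attention to the filtered nature of the cancellation and the exceptional $p=4$, $\s=1$ case matches the caveats the paper itself flags, so there is nothing substantively different here.
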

\begin{proof}
The results of this section imply that each of the $\widetilde{K}_p$ (for $p\ge 3$) are nullhomologous knots in $L(p,1)$, such that 
$$\tau^0 (\widetilde{K}_p) = \tau^0 (\widetilde{K}_3) = -1 \neq 0 = \tau^0 (\widetilde{K}_3) = \tau^0 (\widetilde{K}_p).$$
In particular their $\tau_{sh}$-invariants are non-trivial.
\end{proof}

\begin{rmk}\label{rmk:l21}
We left out the case of $L(2,1)$ up to now, but it is possible to prove that there is a nullhomologous knot $K_2 \in \mathcal{K}(L(2,1))$ such that $\widehat{GH}(L(2,1),K_2,0) \equiv \widehat{GH}(3_1)$ and $\widehat{GH}(L(2,1),K_2,1) \equiv \widehat{GH}(\bigcirc)$. The dimension of a minimal grid representing $K_2$ is $3$, and it can be represented as $\XX = (0,1,2), \OO = (2,3,4)$ (see Figure \ref{fig:nododue}).
\begin{figure}
\includegraphics[width=7cm]{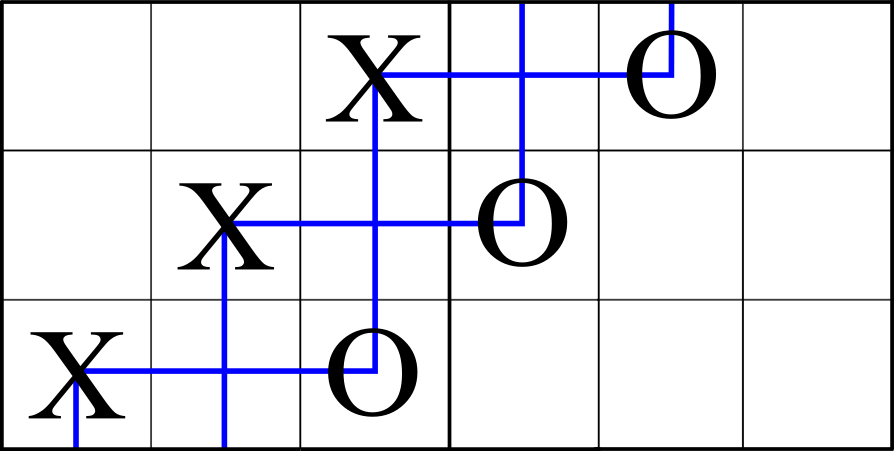}
\caption{A grid representing the ``smallest" knot in $L(2,1)$ with non-trivial $\tau_{sh}$ invariant. }
\label{fig:nododue}
\end{figure}
\end{rmk}

Now we can just recall a result by Hedden \cite{hedden2009knot} adapted to our situation to actually obtain infinitely many non-trivial almost-concordance classes in each $L(p,1)$.

Since we are dealing with nullhomologous knots we do not need to worry about framing issues\footnote{The ambiguity in this case is removed by choosing the Seifert longitude for the pattern attachment.}, which arise in the general case. Note also that any choice of the parameters for the cabling applied to $K_p$ yield another nullhomologous knot.

\begin{thm}[2.2 of \cite{hedden2009knot}]\label{teo:cabling}
Let $K \in \mathcal{K}(L(p,1))$, and choose a sufficiently large integer $n$; then if $K_{m, -mn+1}$ denotes the $(m, mn+1)$-cable of $K$, the following holds for each $\s \in \spinc (L(p,1))$:
\begin{equation}
\tau^\s (K_{m,-mn+1}) = 
\begin{cases}
m \tau^\s (K) + \frac{mn(m-1)}{2} + m-1 \; \mbox{ or }\\
m \tau^\s (K) + \frac{mn(m-1)}{2}.
\end{cases}
\end{equation}
\end{thm}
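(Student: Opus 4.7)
The plan is to adapt Hedden's original cabling argument from $S^3$ (the cited result) to the $\spinc$-decomposed setting of $L(p,1)$. Since $\tau^\s$ is defined one $\spinc$ structure at a time, and the cobordism inequality of Theorem \ref{thm:cobo} also respects the $\spinc$ decomposition, the entire analysis can be performed for each $\s \in \spinc(L(p,1))$ separately. This reduces the statement to establishing matching upper and lower bounds for $\tau^\s(K_{m,-mn+1})$, together with a structural criterion that selects between the two possible values.

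First I would establish the upper bound by constructing an explicit cobordism in $L(p,1)\times [0,1]$ from $K_{m,-mn+1}$ to the connected sum of $m$ parallel copies of $K$ (appropriately resolved) with the torus knot $T(m,-mn+1)\subset S^3$, realized entirely inside a neighborhood of $K$. A standard band-move count shows that this cobordism has genus at most $\tfrac{mn(m-1)}{2}+(m-1)$. Applying Theorem \ref{thm:cobo} together with the additivity statement of Theorem \ref{thm:tauadd} for the $S^3$-summand then gives
\[
\tau^\s(K_{m,-mn+1}) \le m\,\tau^\s(K) + \frac{mn(m-1)}{2} + (m-1).
\]
Reversing the cobordism, capping off appropriately, and using that for $n$ large the relevant torus knot has known $\tau$, supplies the matching lower bound $\tau^\s(K_{m,-mn+1}) \ge m\,\tau^\s(K) + \tfrac{mn(m-1)}{2}$.

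Second, for the dichotomy I would study $\widehat{CFK}(L(p,1), K_{m,-mn+1}, \s)$ as a deformation of $\widehat{CFK}(L(p,1), K, \s)^{\otimes m}$, following Hedden's analysis: for $n$ sufficiently large relative to the Alexander support of $\widehat{HFK}(L(p,1),K,\s)$, the cable complex splits off a summand whose top filtration level is determined by the top filtration level of the companion, and the $+1/(m-1)$ correction is present precisely when the distinguished cycle representing $\tau^\s(K)$ is actually supported in the topmost Alexander grading of $\widehat{HFK}(L(p,1),K,\s)$. This is the $\spinc$-wise analogue of the $\tau(K)=g(K)$ versus $\tau(K)<g(K)$ dichotomy in $S^3$.

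The main obstacle will be the absence of a fully developed integer/rational surgery formula for knots in lens spaces that mirrors Ozsv\'ath--Szab\'o's mapping cone in $S^3$, since Hedden's original argument computes $\widehat{HFK}$ of the cable via large surgery on the companion. To circumvent this I would instead work directly with the combinatorial grid complexes of Baker--Grigsby--Hedden \cite{BGH} on a satellite grid diagram, carefully tracking how the extra rectangles produced by inserting the cable pattern contribute to the filtered differential in each $\spinc$ structure; the structural statement extracted from this calculation should then give exactly the two alternatives in the theorem.
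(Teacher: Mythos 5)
The paper offers no proof of this statement at all: it is imported as Theorem 2.2 of Hedden's cabling paper \cite{hedden2009knot}, ``adapted to our situation,'' so there is no internal argument to compare yours against. Judged on its own terms, your proposal contains one genuine gap and one unnecessary detour. The gap is in the first half. The cobordism inequality of Theorem \ref{thm:cobo} is symmetric in the two ends, so a single genus-$g$ cobordism from the cable $K_{m,-mn+1}$ to $\#^m K \# T(m,mn+1)$ only confines $\tau^\s$ of the cable to a window of width $2g$ centred at $m\tau^\s(K)+\tau(T(m,mn+1))$; ``reversing the cobordism'' returns the same inequality, not a sharper lower bound. Since any such surface has genus on the order of $\frac{mn(m-1)}{2}$, this can never isolate the two admissible values, which differ by only $m-1$. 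No genus-counting argument can produce the dichotomy; all of the content lives in the filtered chain complex analysis you sketch in your second step, which is indeed Hedden's actual route: for $n$ sufficiently large the top Alexander-graded subquotients of the cable's knot Floer complex are identified with $\widehat{HF}$ of large surgeries on the companion, and the inclusion maps computing $\tau^\s$ are matched with the corresponding cobordism maps, forcing exactly the two alternatives.

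The obstacle you flag to carrying this out --- the absence of a surgery formula for knots in lens spaces --- is not actually present. The large-$N$ integer surgery formula of Ozsv{\'a}th and Szab{\'o} (\cite[Thm.~4.4]{holdisknot}) is proved for nullhomologous knots in arbitrary closed oriented three-manifolds, refined over $\spinc$ structures, and Hedden's $\tau_{[x]}$ formalism of \cite{hedden2008ozsvath} is likewise set up for nullhomologous knots in general $Y$. Since the knots being cabled here are nullhomologous in $L(p,1)$, Hedden's proof applies one $\spinc$ structure at a time essentially verbatim, which is precisely why the paper is content to cite it; the combinatorial grid-diagram rerun you propose as a workaround is not needed, and tracking the filtered differential of a satellite grid diagram in each $\spinc$ structure would be considerably harder to control than the holomorphic argument it is meant to replace.
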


\begin{proof}[Proof of Thm. \ref{teo:qconcnonbanalelens}]
The chain homotopies
from Section \ref{sec:extension}, together with Remark \ref{rmk:l21} ensure the existence of a nullhomologous knot $\widetilde{K}_p$ in each $L(p,1)$ ($p\ge2$) which is not almost-concordant to the unknot. Theorem \ref{teo:cabling} ensures that by cabling the knots $\widetilde{K}_p$  we obtain other knots whose first two components of the $\tau$-invariant differ, hence represent non-trivial almost-concordance classes.
It is also easy to show that even keeping the parameter $m$ fixed, \emph{e.g.}$\; m=2$, the $\tau$-invariants one obtains by iterate cabling are all distinct.
Together with Proposition \ref{ellepiuno} this concludes the proof of the Theorem.
\end{proof}

\begin{rmk}\label{rmk:tanteclassi}
An extensive number of computer-aided computations allow one to prove that for $p \le 20$ it is also possible to exhibit non-trivial almost-concordance classes in each homology class of each lens space $L(p,q)$.
\end{rmk}

\begin{rmk}
It is also possible to use methods and computations by Grigsby (\cite{grigsby2006knot}) and Levine \cite{levine2008computing}, to establish the existence of non-trivial almost-concordance classes in rational homology spheres (many of which  are not lens spaces).
\end{rmk}

\begin{rmk}
The behavior of the $\tau$-invariants under mirroring (described in \cite[Prop. 3.5]{hedden2008ozsvath}), coupled with the previous computations imply the existence of a knot $\overline{K}$ in $L(3,2)$ such that $\tau_{sh} (\overline{K}) = \tau (\overline{K}) = (1,0,0)$.
More generally, the procedure detailed in this section allows to exhibit non-trivial almost-concordance classes in $L(p,p-1)$ as well.

The present techniques do not seem however to be directly applicable to the more general case of $q \neq \pm 1$; in particular the \emph{expansion} procedure described in Definition \ref{def:expansion} does not work whenever $q \neq \pm 1$. It seems nonetheless likely (cf. Remark \ref{rmk:tanteclassi}) that even in these other cases one can obtain infinitely many almost-concordance classes in each lens space.
\end{rmk}

\section{Final remarks}

We collect in this last section some results on local knots and prove Theorem \ref{thm:plcobotau}.

It seems natural to ask whether locality and concordance are in some way related. The answer to the following question says that this is not the case.
\begin{ques}\label{questionlocal}
Can a local knot be concordant to a non local knot?
\end{ques}
The answer is positive: the easiest way to produce infinitely many examples was suggested by Marco Golla. Take a non-local and non-nullhomologous knot $(Y,K)$, with $Y$ a rational homology $3$-sphere, and a ribbon pattern $P \subset S^1 \times \D^2$, as in Figure \ref{fig:patternlocale}.

Suppose moreover that $K$ has rational genus\footnote{For the definition see \cite{calegari2009knots}.} $g_{\Q} (K) >0$. Then consider $K_P$, the satellite of $K$ with pattern $P$, embedded in $Y \times \{0\} \subset Y \times [0,1]$. Note that $K_P$ bounds a ribbon disk, and it is nullhomologous in $Y$. Push the ribbon disk inwards, and remove a small disk from its interior. Tubing the boundary of the removed disk to $Y \times \{1\}$ provides the needed concordance from $K_P$ to $(Y ,\bigcirc)$.

Now we need to prove the non-locality of $K_P$; suppose there existed an embedded 2-sphere $S\subset Y$ bounding a ball containing $K_P$.

If the sphere does not intersect $\partial \nu(K)$, then either it is contained in $\nu(K)$ or it contains it. The first case can be easily dismissed by looking at the pattern $P$\footnote{It can be proven that there can not be any such sphere whenever the minimal (geometric) number of intersections with a disk cobounding a meridian is not $0$, see \cite{livingston1981homology}.}.
In the second case we would have found a sphere containing $\nu (K)$, which is absurd by the non-locality of $K$. Then we just need to argue, similarly to what was done in Theorem \ref{thm:conctogenuine}, that all intersections between $S$ and $\partial \nu (K)$ can be removed up to isotopy.

These intersections appear on $S$ as simple and disjoint circles, which might be nested. Consider an innermost circle; if the corresponding intersection is nullhomologous on $\partial \nu (K)$, we can find a $3$-ball bounded by the union of a disk on $S$ and one on $\partial \nu (K)$. So this kind of intersections can be eliminated by an isotopy. There are two qualitatively different kinds of intersections which are not nullhomologous on $\partial \nu (K)$: the ones which are parallel to a meridian, and those which can instead also wind along a longitude for $\partial \nu (K)$. In the former case (again, by considering an innermost circle on $S$), we would have found a disk cobounding a meridian of $\nu(K)$ and not intersecting $K_P$, which is absurd. In the latter, the rational genus hypothesis on $K$ prevents the existence of such a disk.

\begin{figure}
\includegraphics[width=5cm]{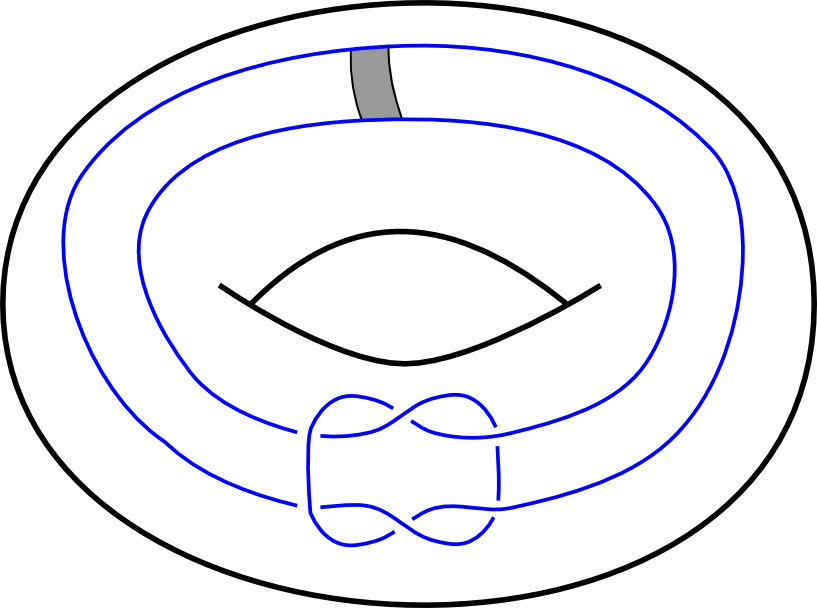}
\caption{The band attachment shown produces a concordance in $S^1 \times \D^2 \times [0,1]$ between $P$ and a pair of unknots.}
\label{fig:patternlocale}
\end{figure}

\begin{rmk}\label{obstructiontolocality}
Theorem \ref{thm:tauadd}, coupled with Equation \eqref{eqbanale} implies that the quantity
$$\max_{i,j \in \zetap} |\tau^i (K) - \tau^j (K)|$$
is an obstruction to locality (up to almost-concordance) for a knot  $(\lp,K)$, \emph{i.e.} if it is nonzero the knot can not be local.
\end{rmk}

It easy to realize that if a (almost) concordance class contains a local knot, then its cobordism $PL$-genus is equal to 0. The following easy lemma strengthens slightly the result:
\begin{lemma}
Let $K \in \nodi$ be a knot concordant to a local knot. Then for every satellite pattern $P$ we have
$$K \dot{\sim} P(K).$$
In other words, almost-concordance classes of local knots are preserved by satellites operators.
\end{lemma}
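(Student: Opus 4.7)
The strategy is to reduce to the case that $K$ itself is local. The hypothesis $K\sim L$ with $L=(Y,\bigcirc)\#(S^3,L')$ forces $K$ to be nullhomologous by the splitting \eqref{eq:splitting}, and Remark \ref{rmk:solouno} immediately yields $K\dot{\sim}\bigcirc$; hence every knot concordant to a local knot lies in the almost-concordance class of $\bigcirc$, and the lemma reduces to proving $P(K)\dot{\sim}\bigcirc$ as well.

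To do this, the plan is to push the satellite operation through the concordance. Let $A\cong S^1\times[0,1]$ be a smooth concordance from $K$ to $L$ in $Y\times[0,1]$. Its normal bundle is an orientable rank-$2$ bundle over a cylinder and hence trivial, so a choice of trivialization identifies a tubular neighborhood $\nu(A)$ with $A\times\mathbb{D}^2$. Inside $\nu(A)$ I would embed a copy of the pattern as $P\times[0,1]\subset(S^1\times\mathbb{D}^2)\times[0,1]$; the trace is a smooth annulus in $Y\times[0,1]$ cobounding the two satellites $P(K)$ and $P(L)$ taken with respect to the framings that the trivialization induces on the ends. Since $L$ is local, we may arrange $\nu(L)$ to sit inside a $3$-ball $B\subset Y$, which forces $P(L)\subset B$ to be local as well, and hence almost-concordant to $\bigcirc$. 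Chaining $P(K)\sim P(L)\dot{\sim}\bigcirc\dot{\sim}K$ then gives the desired $K\dot{\sim}P(K)$.

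The main technical obstacle is that the framings on $K$ and $L$ induced by the trivialization of $\nu(A)$ need not coincide with the Seifert framings used to define the satellites in the statement of the lemma. I would handle this through the $PL$-concordance characterization of almost-concordance (Definition \ref{def:plcobo}): the integer framing discrepancy at either end can be realized as a single cone singularity---the link of the singular point being the corresponding twist knot in $S^3$---so prepending a small singular $PL$-cobordism realigns the framings at the two ends of the smooth annulus built above, producing a genuine $PL$-concordance from the standard Seifert-framed $P(K)$ to the local knot $P(L)$. By Levine's equivalence this is an almost-concordance, which is what the argument needs.
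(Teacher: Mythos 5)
Your overall strategy is sound and at its core matches the paper's: reduce to the observation that both $K$ and $P(K)$ are almost-concordant to $\bigcirc$ and conclude by transitivity. The paper's one-line proof (connect-summing with the reverse mirrors of the $S^3$-parts to get a product concordance) is a compressed version of exactly this, and your intermediate steps --- trivializing the normal bundle $\nu(A)\cong A\times \D^2$ of the concordance, inserting $P\times[0,1]$ to obtain $P(K)\sim P(L)$, and noting that $P(L)\subset\nu(L)\subset B$ is local --- correctly supply details that the paper leaves implicit.

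The flaw is in your final paragraph. Changing the framing of a satellite by $n$ inserts $n$ full twists into \emph{all} the strands of $P$ that meet a meridian disk of the solid torus; unless $P$ has geometric winding number one, this is not a local modification of the resulting knot, and in particular it is not a connected sum with (equivalently, a cone singularity whose link is) a twist knot. So the singular $PL$-cobordism you describe to ``realign the framings'' does not exist in general, and that step would fail as written. Fortunately it is also unnecessary: a reframing of the satellite is the same knot as the satellite of the correspondingly twisted pattern with the original framing, so the framing ambiguity is absorbed into the universal quantification over $P$ in the statement --- this is precisely why the paper remarks after the lemma that the result holds for every choice of framing. (Alternatively, since $K$ and $L$ here are nullhomologous, one can choose the trivialization of $\nu(A)$ to restrict to the Seifert framings at both ends.) Deleting your last paragraph and replacing it with either observation makes the proof complete.
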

Note that we are not taking care of framings when performing the satellite construction. There is no ambiguity, since the result holds for every possible choice of framing for the knot.
\begin{proof}
Just take $K_0^\prime = \overline{P(K)}$ and $K_1^\prime = \overline{K}$ in the notation of Definition \ref{def:almconc}, to obtain the product concordance.
\end{proof}

The converse of this Lemma is quite interesting:
\begin{conj}
A knot $K \in \nodi$ is local if and only if its almost-concordance class is preserved by all satellite operators.
\end{conj}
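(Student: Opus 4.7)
The forward implication is immediate from the preceding Lemma: a local knot is trivially concordant to itself, hence concordant to a local knot, so for every satellite pattern $P$ one has $P(K)\dot{\sim}K$.

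For the converse I would argue by contraposition, exhibiting for each non-local $K\in\nodi$ an explicit satellite pattern $P$ with $P(K)\not\dot{\sim}K$, stratified by the homotopy type of $K$. \emph{Case (a):} if $[K]\neq 0\in H_1(Y;\Z)$, take $P$ of winding number $2$, for instance the $(2,1)$-torus knot inside $S^1\times\D^2$. Then $[P(K)]=2[K]\neq [K]$, and since almost-concordance preserves the homology class (connect summing with knots in $S^3$ does not alter the class in $H_1(Y;\Z)$), it follows that $P(K)\not\dot{\sim}K$. \emph{Case (b):} if $[K]=0$ but $K$ is not freely nullhomotopic, take $P=\bigcirc$ embedded as a small unknot bounding a disk inside $S^1\times\D^2$; then $P(K)$ is the unknot in $Y$, which is nullhomotopic, so Proposition \ref{prop:free} yields $P(K)\not\dot{\sim}K$.

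The residual and genuinely hard situation is \emph{Case (c):} $K$ is nullhomotopic but not local. The first step of the plan is a reduction: for irreducible $Y$ with $\pi_2(Y)=0$, one would hope to show that every nullhomotopic knot is in fact local, via a refined sphere/loop-theorem analysis of a free-product splitting $\pi_1(Y\setminus K)\cong\pi_1(Y)\ast\pi_1(S^3\setminus K_0)$ for some local summand $K_0\subset S^3$. For the remaining reducible manifolds and those containing $S^2\times S^1$ summands, the plan would be to invoke Hedden's generalised $\tau_{[x_i]}$-invariants (Definition \ref{taushiftgener}) together with a Whitehead-style doubling pattern $P$: via the connected-sum formula \eqref{eqn:connectedsum} and a cabling estimate in the spirit of Theorem \ref{teo:cabling}, one would force the $\tau_{[x_i]}$-invariants of $P(K)$ to differ from those of $K$. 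The principal obstacle is precisely Case (c), which seems to demand either a subtle 3-manifold topology argument or a genuinely new Floer-type obstruction; a natural first test case is $Y=S^2\times S^1$ with $K$ nullhomotopic but not contained in a ball, where the relevant invariants are explicitly computable by hand.
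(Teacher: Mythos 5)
This statement is posed in the paper as a \emph{Conjecture}: the paper contains no proof of it, so there is nothing to compare your argument against, and your attempt does not close it. The parts you do prove are correct but are exactly the parts that follow from results already in the paper. The forward implication is indeed immediate from the preceding Lemma. In the converse, Case (a) (a winding number $2$ pattern changes $[K]\in H_1(Y;\Z)$ whenever $[K]\neq 0$, and almost-concordance preserves the homology class) and Case (b) (the trivial pattern produces $\bigcirc$, and Proposition \ref{prop:free} separates it from any knot that is not freely nullhomotopic) are fine, and together they give a genuine reduction of the conjecture to freely nullhomotopic knots. But the whole content of the conjecture lives in your Case (c), and there what you offer is a plan rather than a proof --- and its first step is false.

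Concretely, the hoped-for reduction ``for irreducible $Y$ with $\pi_2(Y)=0$ every nullhomotopic knot is local'' is contradicted by Example \ref{esempio:0134} of this very paper: the knot $\widetilde{K}\subset L(3,1)$ is nullhomologous, hence freely nullhomotopic because $\pi_1(L(3,1))$ is abelian, and $L(3,1)$ is irreducible with trivial $\pi_2$; yet $\tau_{sh}(\widetilde{K})=(0,1,1)$ together with Proposition \ref{prop:localeshif} shows that $\widetilde{K}$ is not even almost-concordant to a local knot, let alone local. The fallback via Hedden's $\tau_{[x_i]}$-invariants (Definition \ref{taushiftgener}) cannot be a complete substitute either: for a $\Z HS^3$ with $\widehat{HF}(Y)\cong\F$ (e.g.\ the Poincar\'e sphere) there is only one class $[x]$ available, so the ``relative'' mechanism that powers $\tau_{sh}$ is vacuous, while there is no reason to expect every nullhomotopic knot there to be local. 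What the conjecture actually demands in Case (c) is: for an \emph{arbitrary} freely nullhomotopic, non-local $K$ --- including those with vanishing $\tau_{sh}$ and all other currently computable invariants --- produce a pattern $P$ together with an invariant certifying $P(K)\not\dot{\sim}K$. Neither your sketch nor anything in the paper supplies that, which is precisely why the statement is left as a conjecture.
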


More generally it would be interesting to find a characterization of the satellite operators which preserve the splitting into almost-concordance classes. Clearly all connected sum operators have this property, but this does not seem to be the case for other winding number 1 operators.
\newline

Before proving Theorem \ref{thm:plcobotau} we need to introduce some notation.

Consider the lattice $L(p) = \Z^{p}$ for $p \ge 2$; we want to endow $L(p)$ with a sort of path metric that encompass the behaviour of the $\tau_{sh}$-invariant under cobordism. 
\begin{defi}\label{def:distanzalattice}
Given two points $x = (x_1, \ldots , x_p)$ and $y = (y_1, \ldots, y_p) $ in $L(p)$, consider  $x^\prime = (x_2 - x_1, \ldots, x_p - x_1), y^\prime = (y_2 - y_1, \ldots, y_p - y_1)$ in $L(p-1)$; now consider the undirected graph $\mathcal{G}_p$ whose vertices are the points in $\Z^{p-1}$ and any two vertices are connected with an edge whenever their coordinates differ by $\epsilon = (\epsilon_1, \ldots, \epsilon_{p-1})$, with $\epsilon_i \in \{-1,0,1\}$ for all $i = 1, \ldots ,p-1$. In other words we are adding all ``diagonals" to the lattice. Endow $\mathcal{G}_p$ with the path metric $\widetilde{D}$, and define $$D(x,y) \coloneqq \widetilde{D} (x^\prime, y^\prime).$$
\end{defi}

\begin{proof}[Proof of Thm. \ref{thm:plcobotau}]
We want to reduce ourselves to a situation in which we can apply Theorem \ref{thm:cobo}. According to Remark \ref{rmk:solouno} genus-$g$ PL-concor-\\dance induces a regular concordance between $K_0\# K_0^\prime$ and $K_1 \# K_1^\prime$ for some $K_0^\prime, K_1^\prime \in \mathcal{K}$; the idea of the proof thus consists in estimating the minimal distance on the lattice $\Z^p$ between $\tau(K_0\# K_0^\prime)$ and $\tau(K_1 \# K_1^\prime)$ for varying $K^\prime$. This is just the minimal distance between the two equivalence classes of $\tau$-invariants induced by almost concordance.

We can consider two representatives $\widehat{K}_0$ and $\widehat{K}_1$ of the almost-concor-\\dance classes of $K_0$ and $K_1$ respectively, such that $\tau^0 (\widehat{K}_0) = \tau^0 (\widehat{K}_1) = 0$.
By Theorem \ref{thm:cobo}, each component of $\tau$ can change by at most $g$ under a cobordism of genus $g$. Hence the distance $\widetilde{D}$ on $\Z^{p-1}$ between $\widehat{K}_0$ and $\widehat{K}_1$ coincides with $$\max_{\s \in \{1,\ldots, p-1\}} \{|\tau^\s(\widehat{K}_0) - \tau^\s (\widehat{K}_1)|\},$$
which by Theorem \ref{thm:cobo} bounds from below the genus of a cobordism between the two representatives of the almost-concordance classes.
\end{proof}

In a previous version of this paper, we posed as a conjecture a question raised by A.Levine, on whether there exists a pair $(Y,m)$ such that $|\conc_m| < + \infty$. Recently Yildiz \cite{yildiz} proved, among other things, that for the pair $(S^2\times S^1, \{p\}\times S^1)$ there is only one almost-concordance class (cf. also \cite{friedl2016satellites}, \cite{nagel2017smooth} and \cite{arunima}).

\bibliographystyle{amsplain}

\providecommand{\bysame}{\leavevmode\hbox to3em{\hrulefill}\thinspace}
\providecommand{\MR}{\relax\ifhmode\unskip\space\fi MR }
\providecommand{\MRhref}[2]{
  \href{http://www.ams.org/mathscinet-getitem?mr=#1}{#2}
}
\providecommand{\href}[2]{#2}

\end{document}